\newcommand{\xleftrightarrow}[2][]{\ext@arrow 3359\leftrightarrowfill@{#1}{#2}}
\definecolor{codegreen}{rgb}{0,0.6,0}
\definecolor{codegray}{rgb}{0.5,0.5,0.5}
\definecolor{codepurple}{rgb}{0.58,0,0.82}
\definecolor{backcolour}{rgb}{0.95,0.95,0.92}
\lstdefinestyle{list_style}{
  backgroundcolor=\color{backcolour}, commentstyle=\color{codegreen},
  keywordstyle=\color{magenta},
  numberstyle=\tiny\color{codegray},
  stringstyle=\color{codepurple},
  basicstyle=\ttfamily\footnotesize,
  breakatwhitespace=false,         
  breaklines=true,                 
  captionpos=b,                    
  keepspaces=true,                 
  numbers=left,                    
  numbersep=5pt,                  
  showspaces=false,                
  showstringspaces=false,
  showtabs=false,                  
  tabsize=2
}
\newcommand{\xdasharrow}[2][->]{
\tikz[baseline=-\the\dimexpr\fontdimen22\textfont2\relax]{
\node[anchor=south,font=\scriptsize, inner ysep=1.5pt,outer xsep=2.2pt](x){#2};
\draw[shorten <=3.4pt,shorten >=3.4pt,dashed,#1](x.south west)--(x.south east);
}
}
\newcommand{\DEBUG}{}
  \def\rem#1{{\marginpar{\raggedright\scriptsize #1}}}
  \newcommand{\pmr}[1]{\rem{\color{blue}{$\bullet$ #1}}}
  \newcommand{\ppr}[1]{\rem{\color{red}{$\bullet$ #1}}}
  \newcommand{\ppr}[1]{}
  \newcommand{\pmr}[1]{}
\newcommand{\one}{\cdot\mathds{1}}
\newcommand{\bigo}{\mathcal{O}}
\newcommand{\var}
{\operatorname{Var}}
\newcommand{\cost}{\operatorname{cost}}
\newcommand{\calf}{{\mathcal F}}
\newcommand{\euler}{e}
\def\rho{\varrho_1}
\def\rd{\,{\mathrm d}}
\def\mc{\mathcal{MC}}
\def\mlmc{\mathcal{ML}}
\theoremstyle{plain}
\newtheorem{theorem}{Theorem}
\newtheorem{definition}{Definition}
\newtheorem{fact}{Fact}
\newtheorem{proposition}{Proposition}
\theoremstyle{definition}
\newtheorem{remark}{Remark}
\newtheorem*{example}{Example}
\begin{document}

\title
[Multilevel Monte Carlo]
{A multilevel Monte Carlo algorithm for SDEs driven by countably dimensional Wiener process 
and Poisson random measure
}

\author[M. Sobieraj]{Micha{\l} Sobieraj}
\address{AGH University of Krakow,
	Faculty of Applied Mathematics,
	Al. A.~Mickiewicza 30, 30-059 Krak\'ow, Poland}
\email{sobieraj@agh.edu.pl, corresponding author}

\begin{abstract}
In this paper, we investigate properties of standard and multilevel Monte Carlo methods for weak approximation of solutions of stochastic differential equations (SDEs) driven by infinite-dimensional Wiener process and Poisson random measure with Lipschitz payoff function. The error of the truncated dimension randomized numerical scheme, which depends on two parameters, i.e grid density $n \in \mathbb{N}$ and truncation dimension parameter $M \in \mathbb{N},$ is of the order $n^{-1/2}+\delta(M)$ such that $\delta(\cdot)$ is positive and decreasing to $0$. We derive complexity model and provide proof for the upper complexity bound of the multilevel Monte Carlo method which depends on two increasing sequences of parameters for both $n$ and $M.$ The complexity is measured in terms of upper bound for mean-squared error and is compared with the complexity of the standard Monte Carlo algorithm. The results from numerical experiments as well as Python and CUDA C implementation are also reported. 
\newline
\newline
\textbf{Key words:} countably dimensional Wiener process, 
Poisson random measure, 
stochastic differential equations with jumps, 
randomized Euler algorithm, multilevel Monte Carlo method, information-based complexity
\newline
\newline
\textbf{MSC 2010:} 65C05, 65C30, 68Q25
\end{abstract}
\maketitle


\section{Introduction}
For $T>0,$ we investigate the problem of efficient approximation of 
$$\mathbb{E}(f(X(T)))$$ 
for a unique strong solution $(X(t))_{t \in [0, T]}$ of a system of $d \in \mathbb{N}$ stochastic differential equations that is written in the following form
\begin{equation}
\label{main_equation}
	\left\{ \begin{array}{ll}
	\displaystyle{
	\rd X(t) = a(t,X(t))\rd t + b(t,X(t)) \rd W(t)}
 + \int\limits_{\mathcal{E}}c(t,X(t-),y)N(\rd y,\rd t), \ t\in [0,T],\\
	X(0)=\eta,
	\end{array} \right.
\end{equation}
such that
$a$ and $c$ are $\mathbb{R}^{d}$-valued, $b$ takes values in the space of square-summable $\mathbb{R}^{d}$-valued sequences, $f: \mathbb{R}^{d} \mapsto \mathbb{R}$ is Lipschitz payoff function, $W = [W_1, W_2, \ldots]^T$ is countably dimensional Wiener process and $N$ is Poisson random measure.
Moreover,
\begin{equation*}
    \int\limits_{0}^{t}b(s,X(s))\rd W(s)=\sum\limits_{j=1}^{+\infty}\int\limits_0^t b^{(j)}(s,X(s))\rd W_{j}(s)
\end{equation*}
is the stochastic It\^o integral wrt the countably dimensional Wiener process $W$ (see pages 427-428 in \cite{cohen_elliott}).
Furthermore, we assume that $d' \in \mathbb{N},
\mathcal{E} := \mathbb{R}^{d'}\setminus \{0\}$ and the intensity measure of $N$ is $\nu(\rd y)\rd t$, where $\nu(\rd y)$ is a finite L\'{e}vy measure on $(\mathcal{E},\mathcal{B}(\mathcal{E}))$. We assume that both $N$ and $W$ are defined on the same complete probability space $(\Omega, \Sigma, \mathbb{P})$ and independent. 
Finally, we impose suitable regularity conditions on the coefficients $a,b,c$ and $\eta.$ 
Analytical properties and applications of such SDEs are widely investigated in \cite{cohen_elliott} and
\cite{Gyngy1980OnSE}. 

The infinite-dimensional Wiener process is the natural extension of standard finite-dimensional Brownian motion which allows us to model more complex structures of the underlying noise.
If $W$ is countably dimensional the stochastic It\^o integral can be understood as a stochastic integral wrt cylindrical Wiener process in the Hilbert space $\ell^2$, see pages 289-290 in \cite{cohen_elliott}.
For relation between theory of Stochastic Partial Differential Equations (SPDEs) and SDEs driven by countably dimensional Wiener, see \cite{walsh} and \cite{liu2015stochastic}.

In many cases, the existence and uniqueness of solutions of SDEs are guaranteed but
the closed-form formulas are not known. It leads to the usage of numerical schemes for approximation of trajectories.
In \cite{SIAM} authors introduce the truncated dimension
Euler algorithm for a strong pointwise approximation of the solutions of \eqref{main_equation} and provide its upper error bounds. The results are further used in this paper in the context of cost and error analysis for both standard and multilevel method.

In 2001, the multilevel Monte Carlo (MLMC) approach was first introduced by Stefan Heinrich (see  \cite{STEFAN}) in the context of parametric integration. Next, in 2008 Mike Giles applied the multilevel method to weak approximation problem in the context of SDEs (see \cite{GILES}). For now, there is a vast literature addressing the application of MLMC method to various classes of SDEs. Nonetheless, so far there was no preceding works that directly address the investigation of MLMC for SDEs driven by a countably dimensional Wiener process. On the other hand, the investigation of the MLMC method for SPDEs is very popular and, as mentioned, related to the concept of SDEs driven by a countably dimensional Wiener process.
For papers regarding SPDEs and MLMC method, the reader is especially referred to \cite{chada2022}, \cite{CLIFFE},
\cite{NEUMULLER}, 
\cite{SPDES1}, \cite{SPDES2}, \cite{SPDES3}, \cite{SPDES4} and \cite{SPDES5}. 

For application of MLMC to SDEs where coefficients of diffusion depend on the distribution of the solution itself, i.e McKean–Vlasov SDEs, the reader is referred to \cite{rached2023doubleloop} and \cite{Haji-Ali2018}.

From practical point of view, one of the MLMC method applications is its extensive usage in Finance (see \cite{gerstner_kloeden},
\cite{Giles2009MultilevelMC}, 
\cite{belomestny},
\cite{burgos_giles}).

To our best knowledge, MLMC can further be extended to Multi-Index Monte Carlo (MIMC) (see \cite{hajiali2015multiindex}). Nevertheless, it is a subject for future research because this geos beyond the scope of this paper.

The main contribution of this paper is a derivation of the cost model for weak approximation with a random number of evaluations and analysis of the cost bounds for standard and MLMC methods for SDEs driven by countably dimensional Wiener process (which induces an additional set of parameters in MLMC method) and Poisson random measure (which imposes the expected complexity model). Extension of the multilevel approach to SDEs driven by countably dimensional Wiener process and Poisson random measure is motivated by and analogous to the approach presented in \cite{GILES}.

The structure of the paper is as follows. In Section \ref{sec:preliminaries} we describe the considered class of SDEs \eqref{main_equation} with admissible coefficients defining the equation. We further recall recent results on the numerical scheme for a strong pointwise approximation of the solutions.
In Section \ref{sec:complex} we define the complexity model.
In Section \ref{sec:mc_algorithm} we investigate properties of the standard Monte Carlo algorithm in a defined setting.
In Section \ref{sec:mlmc_algorithm}
we derive the MLMC algorithm and provide a theorem which addresses its upper complexity bounds together with proof.  Finally, our theoretical results are supported by numerical experiments described in
Section \ref{sec:numer}. Therefore, we also provide the key elements of our current algorithm implementation
in Python and CUDA C.
In section \ref{sec:conclude} we summarize the main results and list the resulting open questions. 

\section{Preliminaries}\label{sec:preliminaries}
We first introduce basic notations and recall class of SDEs for which the strong pointwise approximation problem was investigated in \cite{SIAM}.

Let $x \wedge y := \min\{x, y\}$ and let $x \vee y := \max\{x, y\}$ for any $x,y \in \mathbb{R}.$
We use the following notation of asymptotic equalities. For functions $f,g: [0,+\infty) \to [0,+\infty)$ we write $ 	f(x)=\mathcal{O}(g(x))$ iff there exist $C>0, x_0>0$ such that for all $x \geq x_0$ it holds $f(x)\leq Cg(x)$. Furthermore, we write $f(x)=\Theta(g(x))$ iff $f(x)=\mathcal{O}(g(x)) \ \hbox{and} \ g(x)=\mathcal{O}(f(x))$. The preceding definitions can naturally be extended to arbitrary accumulation points in $[0, +\infty).$ 

Depending on the context, by $\| \cdot\|$ we either denote the Euclidean norm for vectors or Hilbert-Schmidt norm for matrices. The difference should be clear from the context.
We also set 
\begin{displaymath}
   \ell^2 (\mathbb{R}^d) = \{x = (x^{(1)}, x^{(2)}, \ldots)\ | \ x^{(j)}\in \mathbb{R}^d \ \hbox{for all} \ j\in\mathbb{N}, \|x\| < +\infty\},
\end{displaymath}
where $x^{(j)} = \begin{bmatrix} 
x_{1}^{(j)}\\
\vdots \\
x_{d}^{(j)} 
\end{bmatrix}$, $\displaystyle{\|x\|=\Bigl(\sum\limits_{j=1}^{+\infty}\|x^{(j)}\|^2\Bigr)^{1/2}=\Bigl(\sum\limits_{j=1}^{+\infty}\sum\limits_{k=1}^d|x_{k}^{(j)}|^2\Bigr)^{1/2} }$.

Let $(\Omega,\Sigma,\mathbb{P})$ be a complete probability space with sufficiently rich filtration $(\Sigma_{t})_{t\geq 0}$ that satisfies the usual conditions (see \cite{Protter}), and let $\Sigma_{\infty}:=\sigma\Bigl(\bigcup_{t\geq 0}\Sigma_t\Bigr).$ For any random vector ${X: \Omega \mapsto \mathbb{R}^{d}}$ we define its $L^{2}(\Omega)$ norm as $\|X\|_{L^{2}(\Omega)} := (\mathbb{E}\| X \|^{2})^{1/2},$ and by $X^{(i)}$ we mean the $i'th$ independent realization of the vector. Finally, for any random element, including random variable, random vector or random matrix, by $\sigma(X)$ we denote a $\sigma$-algebra generated by $X.$ Similarly, for a sequence of random elements $(X_{i})_{i=1}^{n},$ we define $\sigma$-field generated by it as $\sigma(X_1,X_2,\ldots,X_n):= \sigma\Bigl(\bigcup\limits_{1 \leq i \leq n}\sigma(X_{i})\Bigr).$

Let $d' \in \mathbb{N},
\mathcal{E} := \mathbb{R}^{d'}\setminus \{0\}$ and let $\nu$ be a L\'{e}vy measure on $(\mathcal{E},\mathcal{B}(\mathcal{E}))$, i.e., $\nu$ is a measure on $(\mathcal{E},\mathcal{B}(\mathcal{E}))$ that satisfies condition ${\displaystyle{\int\limits_{\mathcal{E}}(\|z\|^2 \wedge  1) \nu(\rd z)<+\infty}}.$ We further assume that $\lambda := \nu(\mathcal{E}) < +\infty$. 
Hence, by Theorem 1.4.1 in \cite{Kunita}, there exists a scalar  Poisson process $N=(N(t))_{t\geq 0}$ with intensity $\lambda$ and an iid sequence of $\mathcal{E}$-valued random variables $(\xi_k)_{k=1}^{+\infty}$ with the common distribution $\nu(\rd y)/\lambda$ such that the Poisson random measure $N(\rd z,\rd t)$ can be written as follows
$\displaystyle{N(E\times (s,t])=\sum\limits_{N(s) < k\leq N(t)}\mathbf{1}_E(\xi_k)}$ for $0\leq s<t\leq T,  E\in\mathcal{B}(\mathcal{E})$.
Furthermore, we assume that $(\Sigma_t)_{t \geq 0}$ is rich enough, such that $W$ is countably dimensional $(\Sigma_{t})_{t\geq 0}-$Wiener process, $N(\rd z,\rd t)$ is $(\Sigma_t)_{t\geq  0}$-Poisson random measure with the intensity measure $\nu(\rd z)\rd t$, and finally both $W$ and $N$ are independent.

For $D,D_L>0$ we consider $\mathcal{A}(D,D_L)$ a class of all functions $a: [0,T]\times \mathbb{R}^d \mapsto \mathbb{R}^d $ satisfying the following conditions:
\begin{enumerate}
	\item [(A1)] $a$ is Borel measurable,
	\item [(A2)] $\|a(t,0)\|\leq D$ for all $t\in[0,T]$,
	\item [(A3)] $\|a(t,x) - a(t,y)\| \leq D_L\|x-y\|$ for all $x,y \in \mathbb{R}^d$, $t \in [0,T]$.
\end{enumerate}
Let $\Delta = (\delta(k))_{k = 1}^{+\infty} \subset \mathbb{R}_+$ be a positive, strictly decreasing sequence, converging to zero, and let $C>0$,  $\varrho_1 \in (0,1]$.  We consider the following class $\mathcal{B}(C,D,D_L,\Delta,\varrho_1)$ of functions $b = (b^{(1)}, b^{(2)}, \ldots):[0,T] \times \mathbb{R}^d\mapsto\ell^2 (\mathbb{R}^d)$, where $\ b^{(j)}:[0,T] \times \mathbb{R}^d  \mapsto \mathbb{R}^d$, $j\in\mathbb{N}$. Namely, $b\in \mathcal{B}(C,D,D_L,\Delta,\varrho_1)$ iff it satisfies the following conditions:
\begin{enumerate}
	\item [(B1)] $\Vert b(0,0) \Vert \leq D$,
	\item [(B2)] $\Vert b(t,x) - b(s,x)\Vert \leq D_L(1+ \|x\|)|t-s|^{\varrho_1}$ for all $x \in \mathbb{R}^d$ and $t,s\in [0,T]$,
	\item [(B3)] $\Vert b(t,x) - b(t,y)\Vert \leq D_L \|x-y\|$	for all $x,y \in \mathbb{R}^d$ and $t \in [0,T]$,
	\item [(B4)] $\sup_{0\leq t \leq T}\Vert \sum_{i=k+1}^{+\infty} b^{(i)}(t,x)\Vert \leq C(1+ \|x\|)\delta(k)$ for all $k \in \mathbb{N}$ and $x\in \mathbb{R}$.
\end{enumerate}
By $\delta$ we also denote a function on $[1, +\infty)$ which is defined either by linear interpolation of $\Delta$ sequence or simple substitution of index $k$ with continuous variable $x$ in the definition. Such function is invertible and the difference between each $\delta$ is clear from the context.
Let $\varrho_{2}\in (0,1]$ and let $\nu$ be the L\'{e}vy measure as above. We say that a function $c: [0,T]\times \mathbb{R}^d \times \mathbb{R}^{d'} \mapsto \mathbb{R}^d$ belongs to the class $\mathcal{C}(D,D_L,\varrho_2, \nu)$ if and only if
\begin{enumerate}
    \item [(C1)] $c$ is Borel measurable,
	\item [(C2)] $\displaystyle{\biggr(\int\limits_{\mathcal{E}}\|c(0,0,y)\|^p \nu(\rd y)\biggr)^{1/2} \leq D}$,
	\item [(C3)] $\displaystyle{\biggr(\int\limits_{\mathcal{E}}\|c(t,x_1,y) - c(t,x_2,y)\|^2 \  \nu(\rd y)\biggr)^{1/2} \leq D_L\|x_1-x_2\|}$ for all $x_1,x_2 \in \mathbb{R}^d$, $t \in [0,T]$,
	\item [(C4)] $\displaystyle{\biggr(\int\limits_{\mathcal{E}}\|c(t_1,x,y) - c(t_2,x,y)\|^2  \ \nu(\rd y)\biggr)^{1/2} \leq D_L(1+\|x\|)|t_1-t_2|^{\varrho_2}}$ for all $x\in \mathbb{R}^d$, $t_1,t_2 \in [0,T]$.
\end{enumerate}
Finally, we define the following class of initial values
\begin{equation*}
	\mathcal{J}(D) = \{\eta \in L^2(\Omega) \ |  \ \sigma(\eta)\subset\Sigma_0, \Vert \eta \Vert_{L^{2}(\Omega)}\leq D\}.
\end{equation*}
As a set of admissible input data, we consider the following class
\begin{equation*}
	\calf(C,D,D_L,\Delta,\varrho_1, \varrho_2, \nu) = \mathcal{A}(D,D_L) \times \mathcal{B}(C,D,D_L,\Delta, \varrho_1) \times \mathcal{C}(D,D_L,\varrho_2, \nu) \times \mathcal{J}(D).
\end{equation*}

Next, we recall the truncated dimension randomized Euler algorithm, defined in \cite{SIAM} to approximate the value of $X(T)$. 
Note that the drift coefficient is only Borel measurable with respect to time variable, and thus not necessarily continuous. Therefore, the randomization is required to guarantee algorithm convergence for any input from $\mathcal{F}(C,D,D_L,\Delta, \varrho_1, \varrho_2, \nu)$. See section 3 in \cite{PRZYB2014} for the example that shows lack of convergence if someone uses standard Euler scheme.  

Let $M,n \in \mathbb{N}$,  $t_j = jT/n$, $j=0,1,\ldots, n$. We  also use the notation 
 $\Delta W_{j,k} = W_k(t_{j+1}) - W_k(t_j)$ for $k\in 0,1,\dots, M-1.$ Let $\left(\theta_j\right)_{j=0}^{n-1}$ be a sequence of independent random variables, where each $\theta_j$ is uniformly distributed on $[t_j,t_{j+1}]$, $j=0,1,\ldots,n-1$. We also assume that $\displaystyle{\sigma(\theta_0,\theta_1,\ldots,\theta_{n-1})}$ is  independent of $\Sigma_\infty$. For $(a,b,c,\eta) \in \calf( C,D,D_L,\Delta,\varrho_1, \varrho_2, \nu)$ we set
\begin{equation}\label{main_scheme}
	\begin{cases}
		X_{M,n}^{RE}(0) = \eta \\
		X_{M,n}^{RE}(t_{j+1}) = X_{M,n}^{RE}(t_{j}) + a(\theta_j , X_{M,n}^{RE}(t_{j}))\frac{T}{n} + \sum\limits_{k=1}^{M} b^{(k)}(t_j,X_{M,n}^{RE}(t_{j}))\Delta W_{j,k}\\  \quad\quad\quad\quad\quad\quad 
  +\sum\limits_{k=N(t_j)+1}^{N(t_{j+1})}c(t_j,X_{M,n}^{RE}(t_j), \xi_k), \quad j=0,1,\ldots, n-1
	\end{cases}.
\end{equation}
In the following part we assume that if the argument is omitted, then by $X^{RE}_{M,n}$ we mean the random vector $X_{M,n}^{RE}(T).$
By $X^{RE, i}_{M,n},$ we denote the $i'th$ independent sample of the random vector $X^{RE}_{M,n}.$

For the sake of brevity, we also recall the following theorem which corresponds to the rate of convergence of the presented algorithm. The more general case where error is measured in $L^p(\Omega)$ norm can be found in \cite{SIAM}.

\begin{theorem}[\cite{SIAM}]
\label{upper_bound}
	 There exists a constant $\kappa > 1$, depending only on the parameters of the class $\calf(C,D,D_L,\Delta,\varrho_1, \varrho_2, \nu)$, such that for every $(a,b,c, \eta)\in\calf( C,D,D_L,\Delta,\varrho_1, \varrho_2, \nu)$ \linebreak and $M,n\in\mathbb{N}$ it holds
	\begin{equation*}
	    \|X(T)- X^{RE}_{M,n}\|_{L^{2}(\Omega)}\leq \kappa\Bigl( n^{-\alpha} +  \delta(M)\Bigr)
	\end{equation*}
 where $\alpha:=\min\{\varrho_1, \varrho_2, 1/2\}.$
\end{theorem}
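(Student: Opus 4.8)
The plan is to split the error into a \emph{dimension‑truncation} part and a \emph{time‑discretization} part by inserting an auxiliary process $X^M$, defined as the unique strong solution of the SDE obtained from \eqref{main_equation} by replacing $W$ with its truncation $W^M=(W_1,\dots,W_M,0,0,\dots)$ and $b$ with $b^M$ (this solution exists and is unique since $b^M$ inherits the Lipschitz bound (B3)). Since the increments $\Delta W_j$ enter \eqref{main_scheme} only through $b^M$, the scheme effectively uses only the coordinates $W_1,\dots,W_M$, so $X^M$ is the natural reference process and, by the triangle inequality,
\begin{equation*}
\|X(T)-X^{RE}_{M,n}\|_{L^2(\Omega)}\le \|X(T)-X^M(T)\|_{L^2(\Omega)}+\|X^M(T)-X^{RE}_{M,n}\|_{L^2(\Omega)}.
\end{equation*}
A preliminary ingredient used throughout is the uniform moment bound $\sup_{0\le t\le T}\|X(t)\|_{L^2(\Omega)}<+\infty$ together with $\sup_{M,n}\max_{0\le j\le n}\|X^{RE}_{M,n}(t_j)\|_{L^2(\Omega)}<+\infty$, with constants depending only on the class parameters; I would obtain these in the usual way from the linear‑growth consequences of (A2), (B1), (B3), (C2), (C3), the It\^o isometry, the analogous isometry for the compensated Poisson integral (the finiteness $\lambda=\nu(\mathcal{E})<+\infty$ turning the jump part into a compound‑Poisson‑type sum with controllable second moment), and a discrete Gronwall argument.

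For the truncation term I would write $X(t)-X^M(t)$ as the sum of the drift difference, the stochastic‑integral difference $\int_0^t\bigl(b(s,X(s))-b^M(s,X^M(s))\bigr)\rd W(s)$, and the jump difference, then square, take expectations and apply the It\^o isometry. Writing $b(s,X(s))-b^M(s,X^M(s))=\bigl(b(s,X(s))-b^M(s,X(s))\bigr)+\bigl(b^M(s,X(s))-b^M(s,X^M(s))\bigr)$, the first summand involves only the coordinates of $b$ beyond index $M$, so by independence of the $W_i$ and condition (B4) its stochastic integral has second moment at most $C^2\,\delta(M)^2\int_0^t(1+\|X(s)\|^2)\rd s=\bigo(\delta(M)^2)$ after using the moment bound, while the remaining terms are Lipschitz in $X(s)-X^M(s)$ by (A3), (B3), (C3). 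Gronwall's inequality should then give $\|X(T)-X^M(T)\|_{L^2(\Omega)}\le\kappa_1\,\delta(M)$.

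For the discretization term I would pass to the continuous‑time interpolation $\widetilde X$ of the scheme \eqref{main_scheme}, which agrees with $X^{RE}_{M,n}$ at the grid points, and estimate $\sup_{0\le t\le T}\|X^M(t)-\widetilde X(t)\|_{L^2(\Omega)}$. The diffusion and jump increments go through the classical Euler argument: (B2) and (C4) produce the $n^{-\varrho_1}$ and $n^{-\varrho_2}$ contributions coming from freezing the time variable at $t_j$, and (B3), (C3) (together with the short‑interval increment bound for $X^M$) produce the $n^{-1/2}$ contributions from freezing the space variable at $\widetilde X(t_j)$, all via the It\^o isometry, the Poisson isometry, and the compound‑Poisson representation of the jump sum. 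The genuinely different ingredient is the drift term: since $a$ is only Borel measurable in $t$, no time‑regularity estimate is available, and instead I would exploit that, conditionally on $\Sigma_{t_j}$ and on the value $\widetilde X(t_j)$, the randomized evaluation $\tfrac{T}{n}\,a(\theta_j,\widetilde X(t_j))$ is a one‑sample unbiased Monte Carlo estimator of $\int_{t_j}^{t_{j+1}}a(s,\widetilde X(t_j))\rd s$ (because $\theta_j\sim U[t_j,t_{j+1}]$ is independent of $\Sigma_\infty$). Accordingly I would decompose
\begin{equation*}
\sum_{j=0}^{n-1}\Bigl(\tfrac{T}{n}\,a(\theta_j,\widetilde X(t_j))-\int_{t_j}^{t_{j+1}}a(s,X^M(s))\rd s\Bigr)=\sum_{j=0}^{n-1}Y_j+\sum_{j=0}^{n-1}Z_j,
\end{equation*}
with $Y_j:=\tfrac{T}{n}a(\theta_j,\widetilde X(t_j))-\int_{t_j}^{t_{j+1}}a(s,\widetilde X(t_j))\rd s$ and $Z_j:=\int_{t_j}^{t_{j+1}}\bigl(a(s,\widetilde X(t_j))-a(s,X^M(s))\bigr)\rd s$. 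The $Y_j$ are conditionally centered, hence pairwise $L^2(\Omega)$‑orthogonal, with $\|Y_j\|_{L^2(\Omega)}=\bigo\bigl(n^{-1}(1+\|\widetilde X(t_j)\|_{L^2(\Omega)})\bigr)$ by (A2) and (A3), so $\bigl\|\sum_j Y_j\bigr\|_{L^2(\Omega)}^2=\sum_j\|Y_j\|_{L^2(\Omega)}^2=\bigo(n^{-1})$; the $Z_j$ are Lipschitz in the running error $\widetilde X(t_j)-X^M(t_j)$ (plus a term controlled by the local oscillation of $X^M$) by (A3). Collecting $Y_j$, $Z_j$ with the diffusion and jump estimates and applying the discrete Gronwall inequality should give $\|X^M(T)-\widetilde X(T)\|_{L^2(\Omega)}\le\kappa_2\bigl(n^{-\alpha}+n^{-1/2}\bigr)\le 2\kappa_2\,n^{-\alpha}$ with $\alpha=\min\{\varrho_1,\varrho_2,1/2\}$, and adding the two parts finishes the proof with, e.g., $\kappa=\kappa_1+2\kappa_2+1$.

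The step I expect to be the main obstacle is the drift estimate above: one must set up the filtration so that the randomizers $\theta_j$ are jointly independent and independent of $\Sigma_\infty$ while the conditional centering of the $Y_j$ and their $L^2$‑orthogonality hold simultaneously with the recursive Gronwall bookkeeping for the $Z_j$, and the uniform‑in‑$(M,n)$ moment bounds must be secured before being invoked inside the variance computation. In short, the delicate point is the interplay of the Monte Carlo (randomized) error with the pathwise Euler error rather than either estimate on its own.
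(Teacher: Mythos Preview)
The paper does not contain its own proof of this statement: Theorem~\ref{upper_bound} is quoted from the reference \cite{SIAM} and is explicitly presented as a recalled result (``we also recall the following theorem\ldots The more general case\ldots can be found in \cite{SIAM}''). There is therefore nothing in the present paper to compare your argument against.

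That said, your sketch follows exactly the decomposition one expects for this class of results and is, to the best of my reading, the same strategy used in \cite{SIAM}: insert the intermediate $M$--truncated solution $X^M$, bound the truncation error by (B4) and Gronwall, and bound the discretization error by the usual Euler analysis with the randomized drift treated via the conditional--centering / orthogonality trick you describe. The identification of the drift term as the delicate step is correct, and your handling of it (splitting into the martingale--difference part $Y_j$ and the Lipschitz part $Z_j$) is the standard and correct device.

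One small caution on the truncation step: condition (B4) as written in this paper bounds $\bigl\|\sum_{i>M}b^{(i)}(t,x)\bigr\|$, the $\mathbb{R}^d$--norm of the \emph{sum} of the tail columns, whereas the It\^o isometry for the $\ell^2$--valued integrand requires control of $\sum_{i>M}\|b^{(i)}(t,x)\|^2$. Your line ``by independence of the $W_i$ and condition (B4) its stochastic integral has second moment at most $C^2\delta(M)^2\int_0^t(1+\|X(s)\|^2)\rd s$'' implicitly uses the latter. In \cite{SIAM} the intended reading of (B4) is presumably the $\ell^2$ tail bound, so this is a notational ambiguity in the present paper rather than a gap in your reasoning, but you should state explicitly which quantity you are bounding when you write the argument out.
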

In the remaining part of this paper, by $(X(t))_{t\in [0,T]}$ we mean the unique strong solution of the equation \eqref{main_equation} that implicitly depends on $(a,b,c,\eta) \in \calf(C,D,D_L,\Delta,\varrho_1, \varrho_2, \nu).$
In the following section, we define a complexity model in terms of information-based complexity framework (see \cite{ibcbook}). 

\section{Complexity model}
\label{sec:complex}
In \cite{DEREICH20111565}, authors investigate L\'evy driven SDEs where random discretization of time grid imposes complexity measured in terms of expectation. Regarding the truncated dimension randomized Euler algorithm, we utilize the same approach, since the number of evaluations of $c$ is non-deterministic.

First, note that the initial value is $d$-dimensional random vector which is evaluated only once, and therefore its informational cost is always equal to $d.$
On the other hand, the number of scalar evaluations of $a$ in the truncated dimension randomized Euler algorithm is equal to $dn,$ since the algorithm evaluates $d$ coordinates of $a$ once per each step, and the number of steps is $n.$ In a similar manner, we obtain that the overall number of evaluations of $b$ is $dMn.$  
Nevertheless, there are $\sum\limits_{j=0}^{n-1}(N(t_{j+1})-N(t_{j})) = N(T)$ evaluations of $c,$ which results in informational cost of scalar evaluations that is equal to $N(T)d.$
Finally, there are $n$ evaluations of $M$ scalar Wiener increments and $n$ evaluations of Poisson point process $N,$ which results in informational costs of $Mn$ and $n$ respectively. 
Together with that, we arrive at the following proposition.

\begin{proposition}
The informational $\cost(X_{M,n}^{RE,i})$
 of the evaluation of a single random sample $X_{M,n}^{RE, i}$ is
\begin{equation*}
\begin{split}
\cost(X_{M,n}^{RE,i}) & := \#\{ \mbox{scalar evaluations of } a,b,c,\eta,W,N \} \\
& =
d(n + Mn + N(T) + 1) + Mn + n.
\end{split}
\end{equation*}
On the other hand, the expected informational cost of the algorithm is
\begin{equation*}
\begin{split}
    \cost \big{(} X_{M,n}^{RE}\big{)} & := \mathbb{E}\Big{[}\mbox{\# of scalar evaluations of } a,b,c,\eta,W,N \Big{]} \\
    & = d(n + Mn + \lambda T + 1) + Mn + n.
\end{split}
\end{equation*}
\end{proposition}
In this paper, we focus on the informational cost of the evaluation of many independent samples from $X_{M,n}^{RE}.$
From the law of large numbers, which applies to the sequence of i.i.d random variables $(\cost(X_{M,n}^{RE,i}))_{i=1}^{K},$ we obtain 
\begin{equation*}
\sum_{i=1}^{K}\cost(X_{M,n}^{RE,i}) = K (\frac{1}{K}\sum_{i=1}^{K}\cost(X_{M,n}^{RE,i})) \approx K \cost(X_{M,n}^{RE}).
\end{equation*}
Furthermore, we can construct asymptotic confidence interval for the aforementioned estimate. Let $\Phi^{-1}$ denote an inverse CDF of a Normal distribution. From central limit theorem, we obtain that
\begin{equation}
P\Big{(}\big{|}\cost(X_{M,n}^{RE}) -\frac{1}{K}\sum\limits_{i=1}^{K}\cost(X_{M,n}^{RE, i})\big{|} \leq \Phi^{-1}\big{(}(\alpha+1)/2\big{)}\frac{\lambda T}{\sqrt{K}}\Big{)} \approx \alpha
\end{equation}
for sufficiently large number of samples $K$ and confidence level $\alpha \in (0,1).$
Note that informational cost takes values in $\mathbb{N}.$ To guarantee that estimation error of random cost with expected cost is less than or equal to $1$ with probability $\alpha,$ it suffices to use at least $ K \geq \Big{(}\Phi^{-1}\big{(}(\alpha+1)/2\big{)}\lambda T\Big{)}^{2}$ samples. Conversely, for $K$ samples we obtain 
estimation error equal to $1$ with probability $2\Phi(\frac{\sqrt{K}}{\lambda T})-1.$

On the other hand, note that
\begin{equation*}
d Mn \leq \cost \big{(} X_{M,n}^{RE}\big{)} \leq d(\lambda T + 5) M n,
\end{equation*}
since $Mn \geq 1,$ which means that $ \cost \big{(} X_{M,n}^{RE}\big{)} = \Theta(Mn).$
Hence, we define the overall cost of the evaluation of i.i.d. samples 
$(X_{M,n}^{RE,i})_{i=1}^{K}$ as
\begin{equation*}
\cost ( (X_{M,n}^{RE,i})_{i=1}^{K} ) := KMn.
\end{equation*}

\section{Monte Carlo method}\label{sec:mc_algorithm}
In weak approximation, our main interest is the evaluation of
$$
\mathbb{E}(f(X(T))),
$$
where $f$ is Lipschitz payoff function with Lipschitz constant $D_L$.
The expectation above can be estimated with a standard Monte Carlo method that calculates a mean value of $K$ independent samples from $f(X_{M,n}^{RE})$. 
\begin{remark}
\label{mc_cost}
Since Monte Carlo estimator evaluates the payoff function only once per trajectory, the informational cost of the evaluation of $f$ can be neglected in the overall informational cost. Therefore, the total informational cost of the estimator is defined as the overall informational cost for the evalutation of every trajectory itself, namely
\begin{equation*}
\cost{\Big{(}\frac{1}{K}\sum_{i=1}^{K}f(X_{M,n}^{RE,i})\Big{)}} := \cost((X_{M,n}^{RE,i})_{i=1}^{K}) = K M n. 
\end{equation*}
\end{remark}

First, we focus on $L^2(\Omega)$-error of Monte Carlo estimator. Note that $f$ is Lipschitz payoff with Lipschitz constant $D_L$ and $\|X_{M,n}^{RE}\|_{L^{2}(\Omega)}$ is bounded by constant that depends only on parameters of class $\mathcal{F}(C,D,D_L,\Delta, \varrho_1, \varrho_2, \nu)$ (see Lemma 8 in \cite{SIAM}). Therefore, variance of $f(X_{M,n}^{RE})$ is bounded by $\kappa_{2} \geq 0$ that depends only on parameters of the aforementioned class. Rewriting the mean squared error of the standard Monte Carlo estimator as sum of its variance and squared bias indicates that the method has the following upper error bound 
\begin{equation*}
\begin{split}
\mathbb{E}\big{|} \mathbb{E}(f(X(T))) - \frac{1}{K} \sum_{i=1}^{K} f(X_{M,n}^{RE, i}) \big{|}^{2} 
& = \frac{\var(f(X_{M,n}^{RE}))}{K} + \Big{(}\mathbb{E}\big{[}f(X(T)) - f(X_{M,n}^{RE})\big{]}\Big{)}^{2} \\ 
& \leq \kappa_{2} K^{-1} + D_L^{2} (\mathbb{E} \| X(T) - X_{M,n}^{RE}\|)^{2} \\
& \leq \kappa_{2} K^{-1} + 2 (\kappa D_L)^{2}(n^{-2\alpha} + \delta^{2}(M)),
\end{split}
\end{equation*}
which results from
theorem \ref{upper_bound} for bias and the fact that the variance of $f(X_{M,n}^{RE})$ is bounded by the constant.
Therefore, one obtains that
\begin{equation}
\label{mc_eq_1}
\| \mathbb{E}(f(X(T))) - \frac{1}{K} \sum_{i=1}^{K} f(X_{M,n}^{RE, i}) \|_{L^{2}(\Omega)} 
\leq \kappa_{3}(K^{-1/2}+n^{-\alpha} + \delta(M))     
\end{equation}
where $\kappa_3:=\sqrt{\kappa_2} \vee \sqrt{2}\kappa D_L.$
Let
\begin{equation}
\label{mc_eps_def}
\mc(\varepsilon) := \frac{1}{K(\varepsilon)}\sum_{i=1}^{K(\varepsilon)}f(X_{M(\varepsilon), n(\varepsilon)}^{RE,i})
\end{equation}
where
\begin{equation*}
K(\varepsilon) := \lceil \varepsilon^{-2} \rceil,
n(\varepsilon) := \lceil \varepsilon^{-1/\alpha} \rceil,  
M(\varepsilon) := \lceil \delta^{-1}(\varepsilon) \rceil.
\end{equation*}
We next focus on informational cost of $\mc(\varepsilon)$ depending on $\varepsilon>0.$ Note that, $\lceil x \rceil \leq (1+\frac{1}{x_0})x$ for $0<x_0 \leq x,$
thus
\begin{equation*}
\begin{split}
    \varepsilon^{-2} \leq & \ K = \lceil \varepsilon^{-2} \rceil \leq (1 + (1 \wedge \delta(1))^{2})\varepsilon^{-2} \leq 2 \varepsilon^{-2}, \\
    \varepsilon^{-1/\alpha} \leq & \ n = \lceil \varepsilon^{-1/\alpha} \rceil \leq  (1 + (1\wedge \delta(1))^{1/\alpha})\varepsilon^{-2} \leq 2 \varepsilon^{-1/\alpha},
\end{split}
\end{equation*}
and
\begin{equation*}
    \delta^{-1}(\varepsilon) \leq M = \lceil \delta^{-1}(\varepsilon) \rceil \leq 2 \delta^{-1} (\varepsilon)
\end{equation*}
for sufficiently small $\varepsilon$, i.e less than or equal to $1\wedge \delta(1).$ Therefore, following remark \ref{mc_cost}, we obtain tight informational cost bounds 
\begin{equation*}
\varepsilon^{-(2+\frac{1}{\alpha})}\delta^{-1}(\varepsilon) \leq \cost{\Big{(}\mc(\varepsilon)\Big{)}} \leq 8 \varepsilon^{-(2+\frac{1}{\alpha})}\delta^{-1}(\varepsilon).
\end{equation*}
Finally, 
from \eqref{mc_eq_1}
and obtained bounds, the following fact arises
\begin{fact}
\label{fact_mc_err_cost}
For $\varepsilon>0,$ one obtains that
\begin{equation*}
\| \mathbb{E}(f(X(T))) - \mc(\varepsilon)\|_{L^{2}(\Omega)} = \bigo(\varepsilon)
\end{equation*}
and
\begin{equation*}
\cost(\mc(\varepsilon)) = \Theta(\varepsilon^{-(2+\frac{1}{\alpha})}\delta^{-1}(\varepsilon)).
\end{equation*}
\end{fact}

\section{Multilevel Monte Carlo method}\label{sec:mlmc_algorithm}
In this section, we investigate properties of MLMC estimator leveraged for the weak approximation problem introduced in the previous section.

Let $(n_l)_{l=0}^{+\infty} \subset \mathbb{N}$ and $(M_l)_{l=0}^{+\infty} \subset \mathbb{N}$ be two fixed non-decreasing sequences of parameters. For parameters $L \in \mathbb{N}$ and $(K_l)_{l=0}^{L} \subset \mathbb{N},$ MLMC estimator of $\mathbb{E}(f(X(T)))$ is defined as
\begin{equation}
\label{MC}
    \mlmc := \frac{1}{K_{0}} \sum_{i_{0}=1}^{K_0} f(X_{M_{0},n_{0}}^{RE, i_{0}}) + \sum_{l=1}^{L} \frac{1}{K_{l}} \sum_{i_{l}=1}^{K_l}\big{(} f(X_{M_{l},n_{l}}^{RE, i_{l}}) - f(X_{M_{l-1},n_{l-1}}^{RE, i_{l}})\big{)}.
\end{equation}
Note that, in this paper, MLMC estimator depends on a set of parameters both for grid densities and \textbf{truncation parameters}.
As $(n_l)_{l=0}^{+\infty}$ and $(M_l)_{l=0}^{+\infty} \subset \mathbb{N}$ remain fixed, the $L^{2}(\Omega)$-error bound of the estimator can be attained under minimal informational cost if one uses the optimal values for $L$ and $(K_l)_{l=0}^{L}$ (see \cite{GILES}). Such values are derived in the following part of this section.

From now, we assume that for each level $l=1,..., L,$ and $i_l \in \{1,..., K_l\}$ random variables $X_{M_l, n_l}^{RE, i_l}$ and $X_{M_{l-1}, n_{l-1}}^{RE, i_l}$ are coupled only via the use of the same realization of Wiener process, Poisson process, and jump-heights sequence. Drift randomizations between different levels
remain independent of each other.  

First, the very same realization of a Wiener process can be approximated under two different grid densieties $n_l$ and $n_{l-1},$ and thus be referred to as a coupling between Wiener process approximations. It can be achieved if one generates an approximated trajectory with grid denstiy of the least common multiple of $n_l$ and $n_{l-1},$ and then sum its increments between given gridpoints (see section \ref{sec:impl} for more implementation details). 

Next, jump times of a given Poisson process are generated regardless of a grid density by sampling from exponential distribution and then assiging their locations to the right bounds of grid intervals that they fall into accordingly. 

Finally, common jump heights sequence is generated for a given number of jump times using its common distribution.

In the next part of this section we derive optimal parameters for $\mlmc$ estimator and proove that the variance of subsequent levels vanishes to $0$ which is a necessary condition for the algorithm to converge. We also provide the reasoning behind why drift randomizations do not need to coupled. 

First of all, one may define the timestep of the algorithm as $h_l:=T/n_l$ as in \eqref{main_scheme}. Nevertheless, it is convenient to have timestep $h_l$ in the form of
\begin{equation*}
h_l := T \beta^{-l},
\end{equation*} for some $\beta > 1.$ Therefore, we usually assume that the grid densits are defined as 
\begin{equation*}
n_l:=\lceil \beta^{l}\rceil
\end{equation*} for $\beta>1$ and every $l=0,\dots,L,$ so that 
\begin{equation}
\label{nl_hl_2}
\frac{T}{n_l} = \frac{T}{\lceil \beta^l \rceil} \leq T \beta^{-l} = h_l.
\end{equation}
It means that our timesteps do not exceed the desired $h_l$. Furthermore, note that 
\begin{equation*}
n_l^{-\alpha} \leq T^{-\alpha} h_l^{\alpha}
\end{equation*}
for $\alpha > 0,$ which means that we can rewrite thesis of the theorem \ref{upper_bound} in terms of $h_l$ instead of $n_l$ with new constant $\tilde{\kappa}:= ( T^{-\alpha}\vee 1)\kappa.$
Since $\beta > 1,$ we have that $\lceil \beta^{l} \rceil \leq 2 \beta^{l}.$ 
Thus,
\begin{equation}
\label{nl_hl_1}
n_l = \lceil \beta^l \rceil \leq 2 \beta^{l} = 2T \frac{\beta^l}{T} = \frac{2T}{h_l}.
\end{equation}

As already mentioned, mean squared error can be rewritten as sum of variance and squared bias, which can also be applied to $L^{2}(\Omega)$-error multilevel estimator. This leads to the following equality
$$
\| \mathbb{E}(f(X(T))) - \mlmc \|_{L^{2}(\Omega)}^2 = \var(\mlmc) + (\mathbb{E}(f(X(T))) -\mathbb{E}(\mlmc))^2.
$$
Note that $\mathbb{E}(\mlmc) = \mathbb{E}(f(X_{M_L,n_L}^{RE})),$
thus
\begin{equation}
\label{bias_bound}
\begin{split}
(\mathbb{E}(f(X(T))) - \mathbb{E}(\mlmc))^2 
\leq \mathbb{E}\big{|} f(X(T)) - f(X_{M_{L}, n_{L}}^{RE}) \big{|}^{2}  
\leq \kappa_{bias} (h_{L}^{2\alpha}
+ \delta^{2}(M_{L})),
\end{split}
\end{equation}
where
$\kappa_{bias}:=2(D_L \tilde{\kappa})^{2}.$
Therefore, the upper bound for the second term above (squared bias) depends only on the convergence rate of the numerical scheme which is determined by $\alpha$ and $\delta(\cdot)$. 

The MLMC method aims at variance reduction of the estimator to reduce the informational cost. 
In the following part, we provide optimal values for the remaining parameters of MLMC estimator. To investigate the cost of the MLMC method for SDEs driven by the countably dimensional Wiener process, we replicate steps presented in paper \cite{GILES}. 
Let
\begin{equation*}
v_{l} := \var \big{[} f(X_{M_{l}, n_{l}}^{RE}) - f(X_{M_{l-1}, n_{l-1}}^{RE})\big{]},
\end{equation*}
and
\begin{equation*}
v_{0} := \var \big{[} f(X_{M_{0}, n_{0}}^{RE})\big{]}.
\end{equation*}
One can notice that variance of MLMC estimator is equal to
\begin{equation*}
\var[\mlmc] = \sum_{l=0}^{L}\frac{v_{l}}{K_{l}}
\end{equation*}
and, following the remark \ref{mc_cost}, the cost can be defined as
\begin{equation*}
\cost(\mlmc) := \sum_{l=0}^{L} K_{l} M_{l} n_l
\end{equation*}
or alternatively (by inequalities \eqref{nl_hl_2} and \eqref{nl_hl_1}) as
$
\sum_{l=0}^{L} \frac{K_{l} M_{l}}{h_{l}}.$
By minimizing the variance with respect to the fixed cost, 
one obtains that
the optimal value for $K_{l}$ is 
\begin{equation}
\label{optimal_kl}
    K_l = \Big{\lceil} 2 \varepsilon^{-2} \sqrt{\frac{v_l h_l}{M_l}} \sum_{k=0}^{L}\sqrt{\frac{v_k M_k}{h_k}} \Big{\rceil}
\end{equation}
where $\varepsilon$ is the expected $L^{2}(\Omega)$ error of the algorithm. 
Recall that for any two square-integrable random variables $X, Y$ one has that
\begin{equation*}
    \var(X-Y) 
    \leq 
    (\var^{\frac{1}{2}}(X)+\var^{\frac{1}{2}}(Y))^{2}.
\end{equation*}
On the other hand, since
\begin{equation*}
h_{l-1} = \beta h_l \ \text{and} \ \delta(M_{l}) < \delta(M_{l-1}) 
\end{equation*}
for $l=1,\dots, L,$ we have that
\begin{equation*}
\begin{split}
v_l & = \var\big{[}\big{(}f(X(T))-f(X_{M_{l-1}, n_{l-1}}^{RE})\big{)} - \big{(}f(X(T))-f(X_{M_{l}, n_{l}}^{RE})\big{)}\big{]}
\\ & \leq \big{[}\big{(}\var(f(X(T)) - f(X_{M_{l}, n_{l}}^{RE}))\big{)}^{\frac{1}{2}} + \big{(} \var(f(X(T)) - f(X_{M_{l-1}, n_{l-1}}^{RE}))\big{)}^{\frac{1}{2}}\big{]}^{2}
\\ 
& \leq 2D_L^{2}\|X(T) - X_{M_l, n_l}^{RE} \|_{L^{2}(\Omega)}^{2} + 2D_L^{2}\| X(T) - X_{M_{l-1}, n_{l-1}}^{RE}\|_{L^{2}(\Omega)}^{2} \\ 
& \leq 8(1 \vee \beta^{2\alpha})(D_L \tilde{\kappa})^2(h_l^{2\alpha} + \delta^{2}(M_{l-1}))
\end{split}.
\end{equation*}
Similarly, from the fact that variance of $f(X_{M_0, n_0}^{RE})$ is bounded 
and the observation that $\delta$ takes only positive values, we obtain that
\begin{equation*}
\begin{split}
v_0  \leq \mathbb{E}\big{|}f(X_{M_0,n_0}^{RE})\big{|}^{2} \leq \kappa_2 
& = \frac{\kappa_2}{T^{2\alpha} + \delta^{2}(M_0)}(T^{2\alpha} + \delta^{2}(M_0)) \\
& \leq 
\frac{\kappa_2}{T^{2 \alpha}}(h_{0}^{2\alpha} + \delta^{2}(M_{0}))
\end{split}
\end{equation*}
Hence,
\begin{equation}
\label{variance_bound}
v_l \leq 
\kappa_{var}(h_l^{2\alpha} + \delta^{2}(M_{l-1})), \ l \in \{0,\dots, L\} 
\end{equation}
where $\kappa_{var}:=4\beta^{2\alpha}\kappa_{bias} \vee \kappa_{2}T^{-2\alpha}$ and $M_{-1}:=M_{0}.$ 

From inequality \eqref{variance_bound} we obtain that $v_l \to 0$ as $l \to +\infty$ which guarantees that one needs fewer and fewer samples on the next levels. Similarly to SDEs driven by the finite-dimensional Wiener process, it means that coarse levels contribute to the cost with a slightly greater number of independent samples. Inequality \eqref{variance_bound} also indicates that despite the fact that for $l=1,...,L,$ sequences of independent random variables $\theta_{j}^{(l)} \sim \mathcal{U}(j\frac{T}{n_l}, (j+1)\frac{T}{n_l})$ for $j=0,...,n_l-1$ and 
$\theta_{j}^{(l-1)} \sim \mathcal{U}(j\frac{T}{n_{l-1}}, (j+1)\frac{T}{n_{l-1}})$ for $j=0,...,n_{l-1}-1$  are not coupled, both $f(X_{M_{l-1},n_{l-1}}^{RE})$ and $f(X_{M_{l},n_{l}}^{RE})$ approximate the same realization of a random variable $f(X(T))$ which is sufficient for the algorithm to work. 

The general idea for the proof of complexity bounds for the MLMC method consists of a few repeatable steps. To establish the parameters of an algorithm, first, we try to find values for which the upper error bound $\varepsilon>0$ is attained. The parameters depend on $\varepsilon.$ It usually starts with parameter $L$ since it determines the upper bound for the bias of an estimator which is entirely determined by the convergence rate of the numerical scheme and not by the method itself (see inequality \eqref{bias_bound}). Next, we proceed with the number of summands $K_l$ which affects the estimator's variance.
Finally, having obtained concrete parameters' values for which the upper error bound is attained, we check the corresponding complexity of an algorithm in terms of $\varepsilon$ (upper bound for the $L^2(\Omega)$ error).

In the next part of this paper, we provide a theorem that addresses the complexity upper bound for the multilevel algorithm in setting \eqref{main_equation}. As expected, the cost highly depends on $\delta$ sequence. In the general case, the dependence relates to the exponent of a possibly unknown constant.  
Nevertheless, the impact of the exponent can be mitigated in the following family of classes for the inverse of $\delta.$

\begin{definition}
We define the following family of classes
\begin{equation*}
\begin{split}
    &\mathcal{G}_{x_0} := \Big{\{} 
    g: (0,x_0) \mapsto \mathbb{R}_{+}: g-\mbox{strictly decreasing, and} \\
    &\quad\quad\quad \exists_{\tilde{C}: \mathbb{R}_{+} \mapsto \mathbb{R}_{+}} \ \forall_{0<x,y < x_0}: 
    \frac{g(y)}{g(x)}\leq \tilde{C}\Big{(}\frac{\log(x_0/y)}{\log(x_0/x)}\Big{)} \Big{\}}
\end{split}
\end{equation*}
which is parametrized by $x_0 >0.$ The class $\mathcal{G}_{x_0}$ is further called the class of positive log-decreasing functions.
For the sake of brevity, if function $g$ is defined on a domain broader than $(0, x_0),$ by condition $g \in \mathcal{G}_{x_0}$ we usually mean $g|_{(0,x_0)} \in \mathcal{G}_{x_0}.$
\end{definition}

\begin{fact}
For any $x_0 > 0,$ class $\mathcal{G}_{x_0}$ satisfies the following properties:
\begin{enumerate}
    \item [(P1)] $\mathcal{G}_{x_0}$ is non-empty since
    \begin{equation*}
g: (0,x_0) \ni x \mapsto \log(x_0/x) \in \mathbb{R}_{+}
\end{equation*} 
belongs to the class with $\tilde{C}(x) = x.$
    \item [(P2)] For every $g_1, g_2 \in \mathcal{G}_{x_0},$ one has that
    \begin{equation*}
    g_1 + g_2: (0, x_0) \ni x \mapsto g_{1}(x) + g_{2}(x) \in \mathbb{R}_{+}
    \end{equation*}
    and
    \begin{equation*}
    g_1 g_2: (0, x_0) \ni x \mapsto g_1(x) g_2(x) \in \mathbb{R}_{+}
    \end{equation*}
    belong to $\mathcal{G}_{x_0}.$
    \item [(P3)] For every $g \in \mathcal{G}_{x_0}$ and $a > 0,$ one has that
    \begin{equation*}
    a + g: (0, x_0) \ni x \mapsto a + g(x) \in \mathbb{R}_{+}
    \end{equation*}
    and
    \begin{equation*}
    a\cdot g: (0, x_0) \ni x \mapsto a \cdot g(x) \in \mathbb{R}_{+}
    \end{equation*}
    belong to $\mathcal{G}_{x_0}.$
    \item [(P4)] For every $g \in \mathcal{G}_{x_0}$ and $\alpha > 0,$ one has that
    \begin{equation*}
    g^{\alpha}: (0, x_0) \ni x \mapsto (g(x))^{\alpha} \in \mathbb{R}_{+}
    \end{equation*}
     belongs to $\mathcal{G}_{x_0}.$
     \item [(P5)] \label{class_g_facts}
     For any $g \in \mathcal{G}_{x_0}$, $a>0$ and $x \in (0,x_0^{\frac{1}{a}})$ one obtains that
    \begin{equation*}
    g(x^a) \leq \tilde{C}(a)g(x_0^{\frac{a-1}{a}}x), 
    \end{equation*}
    which results from the direct substitution $y:=x_0^{1-a}x^{a}.$
\end{enumerate}
\end{fact}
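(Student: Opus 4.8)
The plan is to verify each of (P1)--(P5) by the same two-step recipe: first observe that the constructed function is still strictly decreasing and strictly positive on $(0,x_0)$ (immediate from elementary monotonicity, using positivity of the factors for the product), and then exhibit an explicit majorant $\tilde C:\mathbb{R}_{+}\to\mathbb{R}_{+}$ realizing the defining inequality. The one recurring tool I would isolate at the outset is the elementary ``mediant'' estimate $\frac{p+q}{r+s}\le\max\{p/r,\,q/s\}$, valid whenever $r,s>0$, which handles the additivity cases.

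For (P1), with $g(x)=\log(x_0/x)$ one has $\frac{g(y)}{g(x)}=\frac{\log(x_0/y)}{\log(x_0/x)}$, so the defining inequality holds with equality for $\tilde C(t)=t$. For the sum in (P2), writing $t:=\log(x_0/y)/\log(x_0/x)$ and applying the mediant estimate gives $\frac{g_1(y)+g_2(y)}{g_1(x)+g_2(x)}\le\max\{\tilde C_1(t),\tilde C_2(t)\}$, so $\tilde C=\max\{\tilde C_1,\tilde C_2\}$ works; for the product, $\frac{g_1(y)g_2(y)}{g_1(x)g_2(x)}=\frac{g_1(y)}{g_1(x)}\cdot\frac{g_2(y)}{g_2(x)}\le\tilde C_1(t)\tilde C_2(t)$, so $\tilde C=\tilde C_1\tilde C_2$ works. (P3) follows the same lines: in $a\cdot g$ the constant cancels in the ratio, so the original witness is kept; in $a+g$, the mediant estimate applied to $\frac{a+g(y)}{a+g(x)}$ (treating the constant summand as contributing the ratio $1$) gives $\tilde C=\max\{1,\tilde C_g\}$. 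For (P4), $t\mapsto t^{\alpha}$ is increasing on $\mathbb{R}_{+}$, so $g^{\alpha}$ is still strictly decreasing, and raising the defining inequality to the power $\alpha$ produces $\tilde C=(\tilde C_g)^{\alpha}$.

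For (P5), I would fix $g\in\mathcal{G}_{x_0}$ with witness $\tilde C$, $a>0$, and $x\in(0,x_0^{1/a})$, and feed the pair of arguments $x^{a}$ and $z:=x_0^{(a-1)/a}x$ into the defining inequality. The constraint $x<x_0^{1/a}$ is precisely what keeps both $x^{a}$ and $z$ inside $(0,x_0)$, so the inequality is applicable. A short logarithm computation, $\log(x_0/x^{a})=\log x_0-a\log x$ and $\log(x_0/z)=\tfrac1a(\log x_0-a\log x)$, shows the argument of $\tilde C$ equals $a$, whence $g(x^{a})\le\tilde C(a)\,g(x_0^{(a-1)/a}x)$; this is the substitution $y:=x_0^{1-a}x^{a}$ against a generic second argument, as indicated.

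No genuine obstacle is expected: each item is a one-line manipulation. The only places asking for a little care are the preservation of strict monotonicity under products and powers, the mediant estimate underlying the sum and additive-constant cases, and the verification in (P5) that both substituted arguments remain in $(0,x_0)$ --- which is exactly the role of the hypothesis $x\in(0,x_0^{1/a})$.
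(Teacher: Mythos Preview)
Your proposal is correct. The paper does not supply a proof of this Fact beyond the one-phrase hint for (P5); it simply lists (P1)--(P5) as evident properties, so your argument is strictly more detailed than what the paper provides. Your treatment of (P5) is exactly the substitution the paper indicates: feeding $x^{a}$ and $x_0^{(a-1)/a}x$ into the defining inequality and computing the log-ratio to be $a$ is the same computation as the paper's hint $y:=x_0^{1-a}x^{a}$ (after the change of variable $x\mapsto x_0^{(a-1)/a}x$).
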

Properties (P1)-(P4) guarantee that for any $x_0 > 0$ class $\mathcal{G}_{x_0}$ is rich in various functions. In fact, for any $x_0>0$ class $\mathcal{G}_{x_0}$ is a convex cone. On the other hand, property (P5) assures that the exponent can be reduced to the multiplicative constant.

The following theorem provides the upper bound for the cost of the multilevel algorithm for the considered class of SDEs. We also provide an additional lower cost bound of the estimator to stress that the cost is greater than the one measured in a finite-dimensional setting. In compliance with \cite{GILES}, the cost in finite-dimensional setting is proportional to $\varepsilon^{-2}(\log(\varepsilon))^{2}$.

\begin{theorem}
\label{main_thm}
Let $(a,b,c,\eta) \in \mathcal{F}(C,D,D_L,\Delta, \varrho_1, \varrho_2, \nu)$ be the tuple of functions defining equation \eqref{main_equation} with  $\varrho_1, \varrho_2 \in [1/2,1],$ so that  $\alpha=1/2.$ For any sufficiently small $\varepsilon \geq 0$
 there exists a multilevel algorithm $\mlmc$ such that
 \begin{enumerate}
\item [i)] 
$$
\|\mlmc - \mathbb{E}(f(X(T)))\|_{L^{2}(\Omega)} \leq \varepsilon,
$$
\item [ii)]
\begin{equation*}
\cost({\mlmc}) \leq 
\begin{cases}
c_7 \varepsilon^{-2} (\log(\varepsilon^{-1}))^{2} \delta^{-1}(\varepsilon), \ \mbox{for} \ \delta^{-1} \in \mathcal{G}_{\delta(1)} \\
c_6 \varepsilon^{-2} (\log(\varepsilon^{-1}))^{2} \delta^{-1}(\varepsilon^{\kappa_{cost}}), \ \mbox{otherwise}
\end{cases},
\end{equation*}
\item [iii)]
\begin{equation*}
    \cost({\mlmc}) \geq c_9 \varepsilon^{-2}
    \Big{(}(\log(\varepsilon^{-1}))^{2} + \delta^{-1}(\varepsilon)\Big{)}.
\end{equation*}
 \end{enumerate}
for some positive constants $c_6, c_7, c_9$ and $\kappa_{cost}>1,$ depending only on the parameters of the class $\mathcal{F}(C,D,D_L,\Delta, \varrho_1, \varrho_2, \nu)$ and some $\beta > 1.$
\end{theorem}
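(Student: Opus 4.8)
The plan is to run the classical Giles-type argument, the one genuinely new ingredient being the choice of the truncation sequence $\{M_l\}$: it must be tuned so that $\delta(M_{l-1})$ is comparable to $h_l^{\alpha}=\sqrt{h_l}$ (recall $\alpha=1/2$ here), and the stray multiplicative constant that this tuning forces \emph{inside} $\delta^{-1}$ is then absorbed using the log-decreasing property, which is precisely why the class $\mathcal{G}_{x_0}$ and Fact (P5) were set up. Concretely I would fix the parameters in three stages — first the number of levels $L$ from the bias bound \eqref{bias_bound}, then the truncation parameters $M_l$ from the variance bound \eqref{variance_bound}, then the replication numbers $K_l$ from the optimal allocation already derived above — and finally read the cost off from $\cost(\mlmc)=\sum_{l=0}^{L}K_lM_ln_l$.

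Since $\alpha=1/2$, \eqref{bias_bound} is $(\mathbb{E}f(X(T))-\mathbb{E}(\mlmc))^2\le\kappa_{bias}(h_L+\delta^2(M_L))$, so I would take $L:=\lceil\log_\beta(8\kappa_{bias}T\varepsilon^{-2})\rceil=\Theta(\log(\varepsilon^{-1}))$ to force $\kappa_{bias}h_L\le\varepsilon^2/4$, and require $M_L\ge\delta^{-1}(c\varepsilon)$ for a small class-dependent $c$ (and, for the sake of (iii), also $M_L\ge\delta^{-1}(\varepsilon)$) to force $\kappa_{bias}\delta^2(M_L)\le\varepsilon^2/4$; together these give squared bias $\le\varepsilon^2/2$. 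For the intermediate levels I would set $M_l:=\lceil\delta^{-1}(c''\beta^{-l/2})\rceil$ with $c''$ small enough that $\{M_l\}$ is non-decreasing and its value at $l=L$ meets the previous requirement (this is consistent because $h_L=\Theta(\varepsilon^2)$ forces $\beta^{-L/2}=\Theta(\varepsilon)$); then $\delta(M_{l-1})\le c''\sqrt{\beta}\,\beta^{-l/2}$, so \eqref{variance_bound} gives $v_l\le\kappa_v h_l$ for a class-dependent $\kappa_v$. Finally I would take $K_l:=\max\{1,\lceil2\varepsilon^{-2}\sqrt{v_lh_l/M_l}\,S\rceil\}$ with $S:=\sum_{k=0}^{L}\sqrt{v_kM_k/h_k}$; the telescoping identity already indicated in the text then yields $\var(\mlmc)=\sum_l v_l/K_l\le\varepsilon^2/2$, and combining with the bias estimate proves (i).

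For (ii), using $n_l\le 2T/h_l$ (inequality \eqref{nl_hl_1}) I would write $\cost(\mlmc)\le 2T\sum_l K_lM_l/h_l\le 4T\varepsilon^{-2}S^2+2T\sum_l M_l/h_l$. From $v_l\le\kappa_v h_l$ and monotonicity of $\{M_l\}$ one gets $S\le\sqrt{\kappa_v}\sum_{l=0}^{L}\sqrt{M_l}\le\sqrt{\kappa_v}(L+1)\sqrt{M_L}$, hence $S^2\lesssim(\log(\varepsilon^{-1}))^2\,\delta^{-1}(c\varepsilon)$, while $\sum_l M_l/h_l\lesssim M_L\beta^L/T\lesssim\varepsilon^{-2}\delta^{-1}(c\varepsilon)$ is of lower order; so $\cost(\mlmc)\lesssim\varepsilon^{-2}(\log(\varepsilon^{-1}))^2\,\delta^{-1}(c\varepsilon)$. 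It remains to remove the constant $c$ from the argument. If $\delta^{-1}\in\mathcal{G}_{\delta(1)}$, property (P5) with $a=2$ gives $\delta^{-1}(c\varepsilon)=\delta^{-1}\big((\sqrt{c\varepsilon})^2\big)\le\tilde{C}(2)\,\delta^{-1}\big(\sqrt{\delta(1)}\cdot\sqrt{c\varepsilon}\big)\le\tilde{C}(2)\,\delta^{-1}(\varepsilon)$ for all sufficiently small $\varepsilon$ (the last step because then $\sqrt{\delta(1)c\varepsilon}\ge\varepsilon$ and $\delta^{-1}$ is decreasing), giving the first branch with $c_7$. Otherwise, for $\varepsilon$ small one has $c\varepsilon\ge\varepsilon^{\kappa_{cost}}$ with, e.g., $\kappa_{cost}=2$, so $\delta^{-1}(c\varepsilon)\le\delta^{-1}(\varepsilon^{\kappa_{cost}})$, giving the second branch with $c_6$.

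For (iii), the $\delta^{-1}(\varepsilon)$ term is immediate from the finest level: $\cost(\mlmc)\ge K_LM_Ln_L\ge M_Ln_L\ge\delta^{-1}(\varepsilon)\,\beta^L\gtrsim\varepsilon^{-2}\delta^{-1}(\varepsilon)$. For the $(\log(\varepsilon^{-1}))^2$ term I would apply Cauchy--Schwarz to $\sum_l\sqrt{v_lM_ln_l}=\sum_l\sqrt{v_l/K_l}\cdot\sqrt{K_lM_ln_l}$, obtaining $\cost(\mlmc)\ge\big(\sum_l\sqrt{v_lM_ln_l}\big)^2/\var(\mlmc)\ge 2\varepsilon^{-2}\big(\sum_l\sqrt{v_ln_l}\big)^2$; invoking the variance lower bound $v_l\gtrsim h_l$ together with $n_l\ge\beta^l$ makes each summand $\gtrsim\sqrt{T}$, so $\sum_l\sqrt{v_ln_l}\gtrsim L+1\gtrsim\log(\varepsilon^{-1})$ and hence $\cost(\mlmc)\gtrsim\varepsilon^{-2}(\log(\varepsilon^{-1}))^2$; taking $c_9$ to be the smaller of the two constants completes (iii). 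I expect the genuinely delicate points to be, first, pinning down $\kappa_{cost}$ — i.e. keeping track of exactly which constants $c$ appear and that they depend only on the class parameters (they come from $\kappa_{bias}$ and $\beta$) — and, second, for (iii), the variance lower bound $v_l\gtrsim h_l$: this should follow from the irreducible contribution of the independently resampled random nodes $\theta^{(l)}_j$ to the variance of the randomized Euler increment, but making it quantitative when $a$ is only Borel measurable in time is the step I would expect to demand the most care.
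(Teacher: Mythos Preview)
Your treatment of (i) and (ii) is essentially the paper's argument: same choice of $n_l$, $M_l$ tuned so that $\delta(M_{l-1})\lesssim h_l^{1/2}$, $L$ from the bias, $K_l$ from the optimal allocation, and the same bookkeeping to arrive at $\varepsilon^{-2}(\log\varepsilon^{-1})^{2}\delta^{-1}(\cdot)$. Your handling of the constant inside $\delta^{-1}$ is a minor variant (you reach $\delta^{-1}(c\varepsilon)$ and then invoke $c\varepsilon\ge\varepsilon^{2}$, whereas the paper goes straight to $\delta^{-1}(\varepsilon^{\kappa_{cost}})$ via the cruder bound $L+1\le c_4\log\varepsilon^{-1}$), but both routes are fine.

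The gap is in (iii). Your Cauchy--Schwarz argument needs the lower bound $v_l\gtrsim h_l$, and you correctly flag this as the delicate step --- but in fact it is not available. Nothing in the hypotheses forces it: if $a$ is time-independent the randomization is vacuous, and the class $\mathcal{F}$ certainly contains inputs for which $v_l=o(h_l)$ (indeed $v_l$ can vanish). The paper sidesteps the issue entirely by a different choice of $K_l$. Instead of allocating by the \emph{actual} variance $v_l$, it allocates by the \emph{upper bound} $c_2h_l$, i.e.\ it sets
\[
K_l:=\Bigl\lceil 2c_2\varepsilon^{-2}\frac{h_l}{\sqrt{M_l}}\sum_{k=0}^{L}\sqrt{M_k}\Bigr\rceil,
\]
which is deterministic and manifestly $\ge 2c_2\varepsilon^{-2}h_l M_l^{-1/2}\sum_k\sqrt{M_k}$. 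Then with $n_l\ge T/h_l$ one gets directly
\[
\cost(\mlmc)\;\ge\;2Tc_2\,\varepsilon^{-2}\Bigl(\sum_{l=0}^{L}\sqrt{M_l}\Bigr)^{2}\;\ge\;2Tc_2\,\varepsilon^{-2}\bigl(L^{2}+M_L\bigr),
\]
and both terms of (iii) fall out with no appeal to any lower bound on $v_l$. The over-allocation does not spoil (ii), since the upper-bound computation was already carried out with $v_l$ replaced by $c_2h_l$ anyway. So the fix is simply to replace your $K_l$ by the paper's deterministic version; your Cauchy--Schwarz detour is then unnecessary.
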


\begin{proof}
Without loss of generality, let $D_L>1.$ Otherwise, note that any $D_L$-Lipschitz function satisfies Lipschitz condition with constant  $D_L \vee 1$. Similarly w.l.o.g 
we assume that $\delta(1) > 1.$ 

Next, let 
\begin{equation*}
n_l := \lceil \beta^{l} \rceil
\end{equation*} for some $\beta > 1,$
so that
\begin{equation*}
h_l := T \beta^{-l} \geq \frac{T}{n_l}.
\end{equation*}
Knowing the exact rate of convergence which depends on $\alpha$ and $\delta,$ let 
\begin{equation*}
M_l :=\lceil \delta^{-1}(\beta^{-\alpha(l+1)})\rceil,
\end{equation*}
thus
\begin{equation*}
\begin{split}
& M_{l} \geq \delta^{-1}(\beta^{-\alpha (l+1)}) \\
& \delta(M_{l}) \leq \beta^{-\alpha (l+1)} = \Big{(}\frac{1}{T \beta}\Big{)}^{\alpha}h_l^{\alpha}.
\end{split}
\end{equation*}
Note that $\beta^{-\alpha(l+1)}$ falls into the domain of $\delta^{-1}$ which is $(0, \delta(1)],$ since $\beta > 1 \geq \delta(1)^{-1/\alpha}.$
From inequality \eqref{bias_bound} one obtains the following upper bound 
\begin{equation*}
\begin{split}
& (\mathbb{E}(f(X(T))) - \mathbb{E}(\mlmc))^2 \leq \kappa_{bias}(h_{L}^{2\alpha}
+ \delta^{2}(M_{L}))\leq 2(1 \vee (T \beta)^{-2\alpha})\kappa_{bias} h_L^{2\alpha} = c_1 h_L^{2\alpha}
\end{split}
\end{equation*}
where $c_1 := 2(1 \vee (T \beta)^{-2\alpha})\kappa_{bias}.$
Having set 
\begin{equation}
\label{l_value_proof}
L := \Big{\lceil} \frac{\log{(\sqrt{2c_1}T^{\alpha} \varepsilon^{-1}})}{\alpha\log{\beta}} \Big{\rceil},
\end{equation}
we get the desired upper bound for squared bias
\begin{equation}
\label{sqr_bias_proof}
(\mathbb{E}(f(X(T))) - \mathbb{E}(\mlmc))^{2} \leq \frac{\varepsilon^{2}}{2}.
\end{equation}
Similarly, for such parameters $n_l$ and $M_l$ and from inequality  \eqref{variance_bound} one obtains that
\begin{equation*}
\begin{split}
v_l \leq \kappa_{var}(h_l^{2\alpha} + \delta^{2}(M_{l-1})) 
\leq 2(1 \vee T^{-2\alpha})\kappa_{var} h_l^{2\alpha} = c_2 h_l^{2\alpha}
\end{split}
\end{equation*}
where
$$
c_2 := 2(1 \vee T^{-2\alpha})\kappa_{var}.
$$
Recalling that $\alpha=1/2,$ the above inequality simplifies to
\begin{equation*}
v_l \leq c_2 h_l.
\end{equation*}
Utilizing the above property, we get the following upper bound for the optimal $K_l.$ Following the idea presented in \cite{GILES}, the actual value for $K_l$ in the proof is further chosen to be equal to the obtained upper bound, namely 
\begin{equation*}
\begin{split}
    K_l & :=
    \Big{\lceil} 2 c_2\varepsilon^{-2} \frac{h_l}{\sqrt{M_l}} \sum_{k=0}^{L}\sqrt{M_k}
    \Big{\rceil}
    =
     \Big{\lceil} 2 c_2\varepsilon^{-2} \frac{h_l^{\alpha+1/2}}{\sqrt{M_l}} \sum_{k=0}^{L}h_k^{\alpha-1/2}\sqrt{M_k}
    \Big{\rceil} \\
    & \geq
    \Big{\lceil} 2 \varepsilon^{-2} \sqrt{\frac{v_l h_l}{M_l}} \sum_{k=0}^{L}\sqrt{\frac{v_k M_{k}}{h_k}} 
    \Big{\rceil}.
\end{split}
\end{equation*}
Note, that
\begin{equation}
    \label{ml_bound}
    1 \leq \frac{\sum_{k=0}^{L}\sqrt{M_k}}{\sqrt{M_l}} 
\end{equation}
for any $L \in \mathbb{N}$ and $l=0,\dots,L.$
Thus, from the above inequality in \eqref{ml_bound}, we have that
\begin{equation}
\label{var_proof}
\var(\mlmc) =\sum_{l=0}^{L}\frac{v_l}{K_l} \leq \sum_{l=0}^{L}\frac{c_2 h_l}{2 c_2 \varepsilon^{-2}h_l} = \frac{\varepsilon^{2}}{2}.
\end{equation}
Henceforth, together from \eqref{sqr_bias_proof} and \eqref{var_proof}, we obtain that
\begin{equation*}
\| \mathbb{E}(f(X(T))) - \mlmc \|_{L^{2}(\Omega)} \leq \varepsilon,
\end{equation*}
which means that the mean square error of our algorithm does not exceed the desired upper bound. Next, we focus on the value of $\cost(\mlmc).$

From assumption that $\beta>1$ we obtain
\begin{equation*}
M_l = \lceil \delta^{-1}(\beta^{-\alpha(l+1)}) \rceil \leq \Big{(}1+\frac{1}{\delta^{-1}(1)}\Big{)} \delta^{-1}(\beta^{-\alpha(l+1)})    
\end{equation*}
for every $l=0,\dots,L.$
Assuming that $\varepsilon$ is sufficiently small, i.e $\varepsilon < 1/\euler,$ from \eqref{l_value_proof} one obtains that
$$
L+1 \leq c_4 \log(\varepsilon^{-1})
$$
for
$$
c_4 := \Big{(}0 \vee \frac{\log(\sqrt{2c_1}T^{\alpha})}{\alpha \log \beta} \Big{)} + \frac{1}{\alpha \log \beta} + 2
$$
which was also stressed in \cite{GILES}.
From this property, we obtain that
\begin{equation*}
\begin{split}
\delta^{-1}(\beta^{-\alpha(L+1)}) & \leq 
\delta^{-1}(\varepsilon^{c_4 \alpha \log \beta})
= \delta^{-1}(\varepsilon^{\kappa_{cost}}),
\end{split}
\end{equation*}
where $\kappa_{cost}:=c_4\alpha \log \beta.$
Therefore,
\begin{equation*}
M_l \leq c_5 \delta^{-1}(\varepsilon^{\kappa_{cost}})
\end{equation*}
for $l=0,\dots,L$ where $c_5:=\Big{(}1+\frac{1}{\delta^{-1}(1)} \Big{)}.$
Since 
\begin{equation*}
K_l \leq 2c_2 \varepsilon^{-2}\frac{h_l}{\sqrt{M_l}}\sum_{k=0}^{L}\sqrt{M_k} +1,
\end{equation*}
the following inequality holds true
\begin{equation*}
\begin{split}
K_{l}M_{l}n_{l} 
\leq (2c_2 \varepsilon^{-2}\frac{h_l}{\sqrt{M_l}}\sum_{k=0}^{L}\sqrt{M_k} + 1)M_l\frac{2T}{h_l}
\end{split}.
\end{equation*}
Thus, it results in
\begin{equation*}
\begin{split}
\sum_{l=0}^{L}K_{l}M_{l}n_{l}  & 
\leq \sum_{l=0}^{L} (2c_2 \varepsilon^{-2}\frac{h_l}{\sqrt{M_l}}\sum_{k=0}^{L}\sqrt{M_k} + 1)M_l\frac{2T}{h_l} \\
& \leq 2T \Big{(}2c_2 \varepsilon^{-2} +\sum_{l=0}^{L} h_l^{-1}\Big{)}(L+1)^{2}M_{L}.
\end{split}
\end{equation*}
Similarly as in \cite{GILES}, note that 
\begin{equation*}
h_l^{-1}=T^{-1}\beta^{l} = T^{-1} \beta^{L} \beta^{l-L} = h_L^{-1} \beta^{l-L}
\end{equation*}
and from \eqref{l_value_proof} 
\begin{equation*}
    \begin{split}
        & L - 1 \leq \frac{\log(\sqrt{2 c_1}T^{\alpha}\varepsilon^{-1})}{\alpha \log \beta} \\
        & h_L^{-\alpha} = (T^{-1} \beta^{L})^{\alpha} \leq \sqrt{2 c_1} \beta^{\alpha} \varepsilon^{-1} \\
        & h_L^{-1} \leq (\sqrt{2 c_1})^{\frac{1}{\alpha}} \beta \varepsilon ^{-1/\alpha} = 2c_1 \beta \varepsilon^{-2}.
    \end{split}
\end{equation*}
From these inequalities, one obtains that
\begin{equation*}
\sum_{l=0}^{L}h_l^{-1} = h_L^{-1}\sum_{l=0}^{L}\beta^{l-L} 
\leq \frac{(\sqrt{2 c_1})^{\frac{1}{\alpha}}\beta^2}{\beta - 1} \varepsilon^{-2}. 
\end{equation*}
From previously obtained upper bounds we get that
\begin{equation*}
\begin{split}
\sum_{l=0}^{L}K_lM_ln_l & \leq 2T\Big{(}2c_2 + \frac{(\sqrt{2 c_1})^{\frac{1}{\alpha}}\beta^2}{\beta - 1} \Big{)}\varepsilon^{-2}(L+1)^{2}M_{L} \\
& \leq 2T\Big{(}2c_2 + \frac{(\sqrt{2 c_1})^{\frac{1}{\alpha}}\beta^2}{\beta - 1} \Big{)}c_{4}^{2}c_{5}\varepsilon^{-2}(\log(\varepsilon^{-1}))^{2}\delta^{-1}(\varepsilon^{\kappa}) \\
& = c_6 \varepsilon^{-2}(\log(\varepsilon^{-1}))^{2}\delta^{-1}(\varepsilon^{\kappa})
\end{split}
\end{equation*}
where $c_6:=2T\Big{(}2c_2 + \frac{(\sqrt{2 c_1})^{\frac{1}{\alpha}}\beta^2}{\beta - 1} \Big{)}c_{4}^{2}c_{5}.$ 
Note that $\kappa_{cost}=c_4 \alpha \log \beta>1,$ thus 
\begin{equation*}
\delta^{-1}(\varepsilon^{\kappa_{cost}}) \geq \delta^{-1}(\varepsilon).
\end{equation*} Furthermore, if $\varepsilon<(\delta(1))^{1/\kappa_{cost}}$ and $\delta^{-1} \in \mathcal{G}_{\delta(1)},$ from property (P5) in fact \ref{class_g_facts}, we obtain that
\begin{equation*}
    \delta^{-1}(\varepsilon^{\kappa_{cost}}) \leq \tilde{C}(\kappa_{cost}) \delta^{-1}(c_8 \varepsilon) \leq \tilde{C}(\kappa_{cost}) \delta^{-1}(\varepsilon)
\end{equation*}
where $c_8:= (\delta(1))^{\frac{\kappa_{cost}-1}{\kappa_{cost}}} > 1.$
It completes the proof for the upper complexity bounds of the algorithm with $c_7:=c_6 \tilde{C}(\kappa_{cost})$. 

We now proceed with additional lower complexity bound. 
From 
\begin{equation*}
 \kappa_{bias} \delta^{2}(M_L) \leq \kappa_{bias}(h_{L}^{2\alpha}
+ \delta^{2}(M_{L})) \leq \varepsilon^{2}
\end{equation*} and the observation that $\sqrt{\kappa_{bias}} = \sqrt{2} D_L \tilde{\kappa} > 1,$
we obtain that
\begin{equation*}
    M_{L} \geq \delta^{-1}\Big{(}\frac{\varepsilon}{\sqrt{\kappa_{bias}}}\Big{)} > \delta^{-1}(\varepsilon).
\end{equation*}
Similarly, from the lower bounds on $K_l$ and $n_{l}$ we obtain
\begin{equation*}
\begin{split}
\cost({\mlmc}) & 
\geq \sum_{l=0}^{L}\Big{(}2c_2 \varepsilon^{-2}\frac{h_l}{\sqrt{M_l}}\sum_{k=0}^{L}\sqrt{M_k}\Big{)}\frac{2T M_l}{h_l} \\
& \geq 4T c_2 \varepsilon^{-2}\Big{(} L^{2} + M_{L}\Big{)} \\
& \geq 4T c_2 (1 \wedge (\alpha \log \beta)^{-1}) 
\varepsilon^{-2}\Big{(} \log(\sqrt{2 c_1}T^{\alpha} \varepsilon^{-1}) + \delta^{-1}(\varepsilon)\Big{)} \\
& \geq c_9 \varepsilon^{-2}\Big{(}(\log(\varepsilon^{-1}))^{2} + \delta^{-1}(\varepsilon)\Big{)}
\end{split}
\end{equation*}
where 
\begin{equation*}
c_9 := 4T c_2 (1 \wedge (\alpha \log \beta)^{-1})
\end{equation*}
and 
\begin{equation*}
\begin{split}
& \sqrt{2c_1}T^{\alpha} = 2 \sqrt{\kappa_{bias}}( T^{\alpha} \vee \beta^{-\alpha}) \\
& \geq 2 \sqrt{\kappa_{bias}} T^{\alpha} = 2\sqrt{2} D_L \tilde{\kappa} T^{\alpha} \\
& = 2\sqrt{2} D_L  (1 \vee T^{-\alpha}) T^{\alpha} \kappa \geq 2\sqrt{2} D_L \kappa > 1,
\end{split}
\end{equation*}
which completes the proof.
\end{proof}

\section{Numerical experiments}
\label{sec:numer}
In this section, we compare results from numerical experiments carried out for standard Monte Carlo method defined in section \ref{sec:mc_algorithm} and MLMC method defined in section \ref{sec:mlmc_algorithm}. 

Some of the parameters in the definition of MLMC remain unknown, since they depend on the variance of corresponding levels (see equation \eqref{optimal_kl}). In subsection \ref{sec:mlmc_params} we overcome this issue by following the approach presented in \cite{GILES}, and hence introduce the approximate value of MLMC estimator with $L^{2}(\Omega)$-error bound of $\varepsilon>0,$ which is further denoted by $\widehat{\mlmc}(\varepsilon).$

In subsection section \ref{sec:example} we introduce example equation and provide results from numerical experiments perfmored for it.
For $\varepsilon>0,$ we check estimated $L^{2}(\Omega)$-error of $\mc(\varepsilon)$ (see equation \eqref{mc_eps_def} for the definition) and relation with its informational cost intorduced in remark \ref{mc_cost}. The log-log plot for error vs cost is compared with the actual theoretical slope according to fact \ref{fact_mc_err_cost}. Similarily, we compare estimated $L^{2}(\Omega)$ error of $\widehat{\mlmc}(\varepsilon)$ with its estimated informational cost and plot the theoretical upper bound for the cost (see Theorem \ref{main_thm}). The unknown constants in the upper bound for the cost are estimated with \texttt{curve\_fit} procedure from \texttt{scipy.optimize} subpackage.

Finally, for the convenience of the reader, in subsection \ref{sec:impl} we provide implementation details.

\subsection{Multilevel Monte Carlo parameters}
\label{sec:mlmc_params}
In numerical experiments, the main parameters of the multilevel method are set as defined in the proof of theorem \ref{sec:mc_algorithm} with $\beta=2,$ i.e $n_{l}=2^{l}$ and $M_{l}=\lceil \delta^{-1}(2^{-(l+1)/2})\rceil$ for $l \in \mathbb{N}.$
The following part of this subsection refers to the remaining parameters of an algorithm which are dynamically estimated with the procedure presented in \cite{GILES}. 

Until the next subsection let superscript of any estimator denote the iteration number of an algorithm. For example, let $\widehat{L}^{(i)}$ denote the number of levels (excluding zero-level) at $i$-th iteration of the procedure. The number of levels is updated according to the following formula
\begin{equation*}
\widehat{L}^{(i+1)} = 
\begin{cases}
    \widehat{L}^{(i)}, \ (i>2) \ \mbox{and} \  ({convergence\_error}(i) < 0) \\
    \widehat{L}^{(i)} + 1, \ \mbox{otherwise}
\end{cases}
\end{equation*}
for $i \in \mathbb{N}$ and $\widehat{L}^{(1)}=0,$ meaning that we start our procedure with single level.
On the other hand, the final iteration id is denoted by $fin:= \min \{i \in \mathbb{N}: \widehat{L}^{(i)}=\widehat{L}^{(i+1)}\}.$
Since the optimal values for $K_l$ depend on the variance of respective levels, at $i$-th iteration one estimates $K_l$ with $\widehat{K}_{l}^{(i+1)}$ utilizing proper variance estimate $\widehat{v}_{l}^{(i)}$ of $v_l.$
The variance estimates are updated regarding the following formula
\begin{equation*}
\widehat{v}_l^{(i)} = \begin{cases}
\mbox{estimate of} \ v_l \ \mbox{with} \ 10^3 \ {samples}, \ l = \widehat{L}^{(i)} \\
\mbox{estimate of} \ v_l \ \mbox{with} \ \widehat{K}_{l}^{(i)} \ \mbox{samples}, \ l = 0,...,\widehat{L}^{(i)}-1
\end{cases}
\end{equation*}
for $l=0,..., \widehat{L}^{(i)}.$
It means that the variance of the current top level is always estimated with 1000 samples which is inspired by \cite{GILES}.
And the number of samples $\widehat{K}_{l}^{(i+1)}$ per level is always updated with respect to the following formula 
\begin{equation*}
\widehat{K}_l^{(i+1)} := \Big{\lceil} 2 \varepsilon^{-2} \sqrt{\frac{\widehat{v_l}^{(i)}}{M_l n_l}} \sum_{k=0}^{\widehat{L}^{(i)}}\sqrt{\widehat{v_k}^{(i)} M_k n_k} \Big{\rceil}
\end{equation*}
for $l=0,...,\widehat{L}^{(i)}.$
Finally, let
\begin{equation*}
\widehat{Y}_{l}^{(i)} := \frac{1}{\widehat{K}_{l}^{(i)}}\sum_{i_l=1}^{\widehat{K}_{l}^{(i)}}(f(X_{M_{l}, n_{l}}^{RE, i_{l}})-f(X_{M_{l-1}, n_{l-1}}^{RE, i_{l}})),
\end{equation*}
so that the convergence error function is defined as 
\begin{equation*}
convergence\_error: \mathbb{N} \setminus \{1, 2\} \ni i \mapsto (\frac{1}{2} \big{|}\widehat{Y}_{\widehat{L}^{(i)}-1}^{(i)}\big{|} \vee  \big{|}\widehat{Y}_{\widehat{L}^{(i)}}^{(i)}\big{|}) - (\sqrt{2}-1)\frac{\varepsilon}{\sqrt{2}}.
\end{equation*}
From now on, by $\widehat{\mlmc}(\varepsilon)$ we define the value of multilevel algorithm that uses the aforementioned estimates $\widehat{L}^{(fin)}, (\widehat{K}_{l}^{(fin)})_{l=0}^{\widehat{L}^{(fin)}}$ and defined parameters $(M_l)_{l \in \mathbb{N}}, (n_l)_{l \in \mathbb{N}}$.

\subsection{Example equation}
\label{sec:example}
In numerical experiments, we used the following equation and payoff function.

\begin{example}[\textbf{Merton model with Call option payoff}]

Let us consider the following equation
\begin{equation}
\label{sim:infBS}
    X(t) = \eta + \int\limits_0^t \mu X(s)\text{d}s + \sum_{j=1}^{+\infty}\int\limits_{0}^t \frac{\sigma_j}{j^\alpha}X(s)\text{d}W_j (s)
     + \int\limits_0^t X(s-)\text{d}L(s), \quad t\in [0,T],
\end{equation}
where $\eta, \mu \in \mathbb{R},$ $\alpha \geq 1, (\sigma_j)_{j=1}^{+\infty}$ is a bounded sequence of positive real numbers, and $L=(L(t))_{t\in[0,T]}$ is a compound Poisson process with intensity $\lambda >0$ and jump heights $(\xi_{i})_{i=1}^{+\infty}.$ The closed-form solution of equation \eqref{sim:infBS} has the following formula
\begin{eqnarray*}
        X(t) = \eta\exp\biggr[\biggr(\mu -\frac{1}{2} \sum_{j=1}^{+\infty}\frac{\sigma_j^2}{j^{2\alpha}}\biggr)t + \sum_{j=1}^{+\infty}\frac{\sigma_j}{j^\alpha}W_j(t)\biggr]\prod_{i=1}^{N(t)}(1+\xi_{i}).
\end{eqnarray*}
Note that it can be simulated on computer by truncating an infinite sums in the above formula. The truncated solution we can simulate is further denoted by $X_{M}(t)$ for $M \in \mathbb{N}.$ For $j=1,\dots,M$, independent random variables $W_{j}(t)$ are sampled from normal distribution with zero-mean and variance equal to $t.$ Accordingly, $N(t)$ is sampled from Poisson distribution with intensity $\lambda t.$ Finally, for $i=1,\dots,N(t)$ independent random variables $\xi_i$ are sampled from their common distribution. For equation \eqref{sim:infBS}, we obtain that $\delta(M) = \Theta(M^{-\alpha + 1/2}$ (see Fact 1 in \cite{SIAM}).

Next, for simulation purposes, we set 
$\mu = 0.08, \sigma_j = \sigma = 0.4$ for $j \in \mathbb{N}$ and $ \ \alpha = T = \eta = \lambda = 1$ with call-option payoff $f(x):= (x - 1 \vee 0).$ Let $(Y_{i})_{i=1}^{+\infty}$ be a sequence o  independent random variables that are normally distributed with zero mean and unit variance. We assume that the jump heights sequence of random variables is defined as
$\xi_{i} = -0.5\one_{(-\infty, 0]}(Y_{i}) + (0.5+Y_{i})\one_{(0, +\infty)}(Y_{i}).$

Since the closed-form solution of the equation is known, the value of $\mathbb{E}(f(X(T)))$ can be estimated with standard Monte Carlo method, i.e
\begin{equation*}
\mathbb{E}(f(X(T))) \approx \frac{1}{10^6}\sum_{k=1}^{10^6}f(X_{12\cdot10^3}^{(k)}(T)).
\end{equation*}
Hence, for both standard Monte Carlo and MLMC algorithms, we can estimate their respective $L^2(\Omega)$-errors with the following formula 
\begin{equation*}
    \widehat{e}_{K}(Y) := \left(\frac{1}{K}\sum_{i=1}^{K}\big|Y^{(i)}(\varepsilon) - \frac{1}{10^6}\sum_{k=1}^{10^6}f(X_{12\cdot10^3}^{(k)}(T))\big|^2\right)^{1/2},
\end{equation*}
for $Y \in \{\mathcal{MC}(\varepsilon), \widehat{\mlmc}(\varepsilon)\}.$ 
Since the cost of $\widehat{\mlmc}(\varepsilon)$ is a random variable, we estimate it with the mean cost, i.e
\begin{equation*}
    \cost(\widehat{\mlmc}(\varepsilon)) := \frac{1}{K}\sum_{i=1}^{K}\cost(\widehat{\mlmc}^{(i)}(\varepsilon)) = \frac{1}{K}\sum_{i=1}^{K}\sum_{l=0}^{\widehat{L}^{(fin), i}}\widehat{K}_l^{(fin), i} M_l n_l.
\end{equation*}
In numerical experiments both $\widehat{e}_{10^4}(\mc(\varepsilon))$ and $\widehat{e}_{10^3}(\widehat{\mlmc}(\varepsilon))$ were evaluated for various values of $\varepsilon>0.$ 

In figure \ref{fig:mc} the reader can refer to log-log plot of $\widehat{e}_{10^4}(\mc(\varepsilon))$ vs $\cost(\mc(\varepsilon))$ as well as the expected theoretical slope in terms of $\varepsilon>0$. On the other hand, in figure \ref{fig:mlmc} one can find a plot of $\widehat{e}_{10^3}(\widehat{\mlmc}(\varepsilon))$ vs $\cost(\widehat{\mlmc}(\varepsilon))$ as well as the comparison with expected cost upper bound with unknown constants obtained from nonlinear regression. Note that, both estimated $L^{2}(\Omega)$-error and ifnormational cost are random. Hence, some observations in figure \ref{fig:mlmc} fall above estimated theoretical cost upper bound.
Similarily, in figure \ref{fig:mlmc_err} we compare estimated $L^{2}(\Omega)$-error of $\widehat{\mlmc}(\varepsilon)$ with $\varepsilon >0.$
Finally, in figure \ref{fig:mc_vs_mlmc}, one can find the comparison between informational costs of different algorithms with respect to estimated $L^{2}(\Omega)$-errors.



\begin{figure*}[h!]
    \centering
    \begin{subfigure}[t]{0.5\textwidth}
        \centering
        \includegraphics[width=1.0\textwidth]{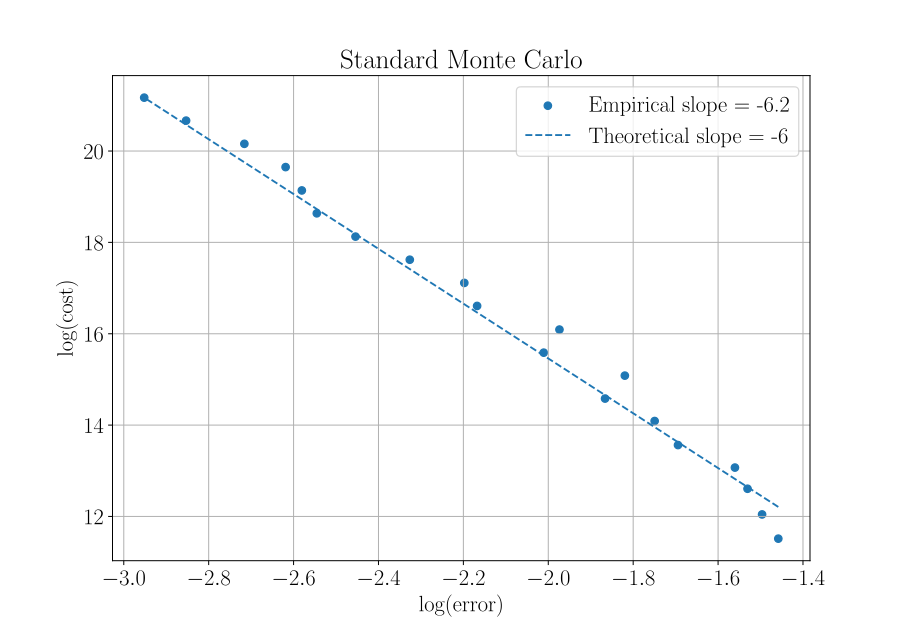}
    \caption{Monte Carlo MSE error vs informational cost}
    \label{fig:mc}
    \end{subfigure}%
    ~ 
    \begin{subfigure}[t]{0.5\textwidth}
        \centering
        \includegraphics[width=1.0\textwidth]{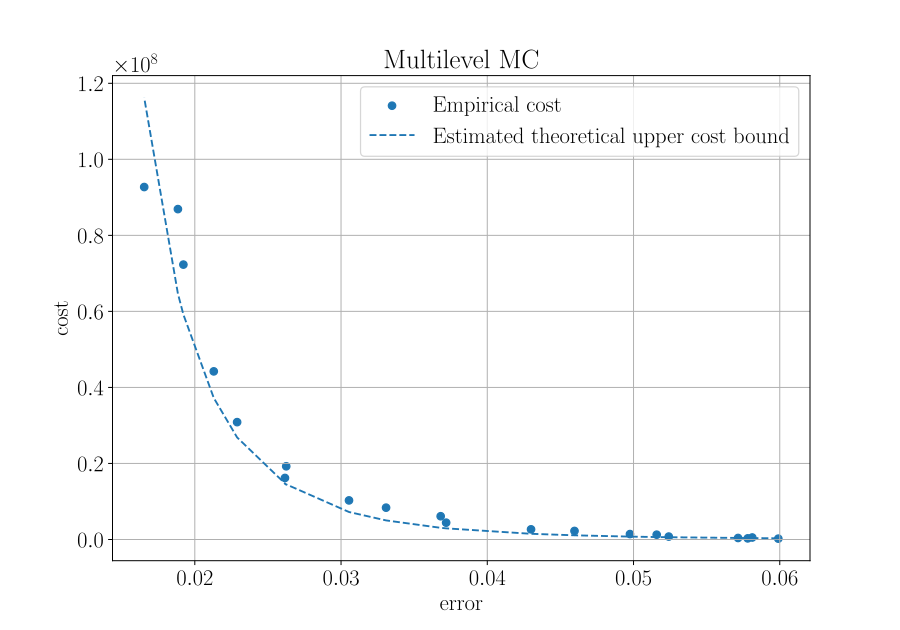}
    \caption{MLMC MSE error vs informational cost}
    \label{fig:mlmc}
    \end{subfigure}
    \caption{Error vs informational cost comparison.}
\end{figure*}


\begin{figure*}[h!]
    \centering
    \begin{subfigure}[t]{0.5\textwidth}
        \includegraphics[width=1.0\textwidth]{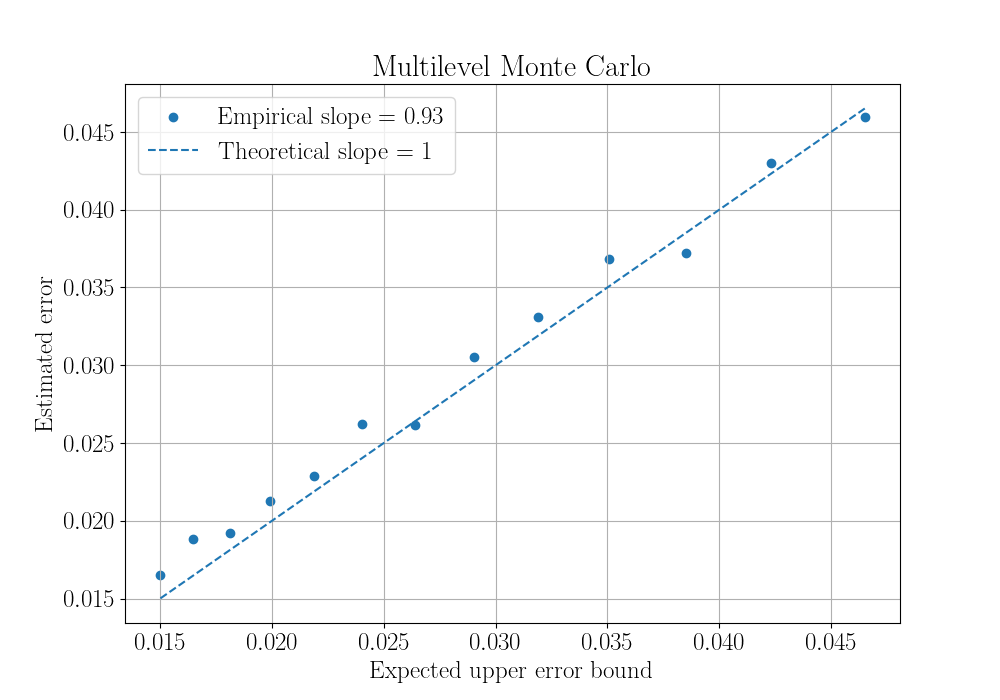}
    \caption{MLMC error and upper error bound}
    \label{fig:mlmc_err}
    \end{subfigure}
    ~ 
    \begin{subfigure}[t]{0.5\textwidth}
        \centering
        \includegraphics[width=1.0\textwidth]{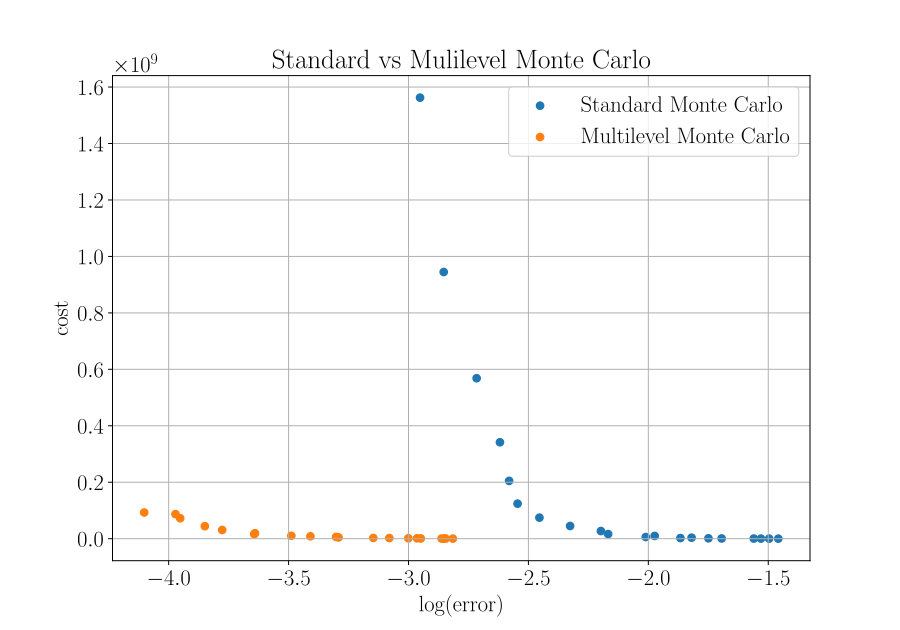}
    \caption{Standard Monte Carlo cost vs MLMC cost }
    \label{fig:mc_vs_mlmc}
    \end{subfigure}%
    \caption{Estimated MLMC error and cost comparison with standard Monte Carlo}
\end{figure*}
\end{example}

\subsection{Details on the implementation}
\label{sec:impl}
For the convenience of the reader, we provide the following code listings that contain implementation of MLMC algorithm.
The implementation utilizes both Python and CUDA C programming languages as well as PyCuda package that allows to call CUDA kernels directly from Python. The pseudo-code for the algorithm that dynamically estimates the number of levels and the variance is similar to the one presented in \cite{GILES}.

A single step of truncated dimension randomized Euler algorithm was implemented as a CUDA device function. See listing 3 in \cite{SIAM} for the reference.

On the very top of the abstraction hierarchy, we implemented the multilevel method in Python that makes direct calls to CUDA kernels via PyCuda API.
The implementation is shown in the listing \ref{mlmccode} below.

\lstinputlisting[language=Python, caption=MLMC implementation, basicstyle=\ttfamily\tiny, label=mlmccode]
{code/mlmc.py}
\begin{enumerate}
    \item [(1)] Initialize local variables.
    \item [(2)] Run the main loop of the procedure.
    \item [(3)] Estimate expectation and expectation of a squared payoff with direct CUDA kernel call.
    \item [(4)] Update a number of samples, expectations, and variance estimates per level if needed.
    \item [(5)] Calculate convergence error.
    \item [(6)] Return the resulting estimate and the corresponding informational cost of an algorithm.
\end{enumerate}

On the other hand, CUDA C implementation of a coupling on different levels is provided in the listing \ref{ylcode} below.

\lstinputlisting[language=C, caption=MLMC implementation, basicstyle=\ttfamily\tiny, label=ylcode]
{code/yl.cu}
\begin{enumerate}
    \item [(1)] Initialize sparse and dense grid Wiener increments.
    \item [(2)] Initialize temporary variables for sparse and dense grid trajectories.
    \item [(3)] Get least common multiple of grid densities.
    \item [(4)] Generate all jumps.
    \item [(5)] Traverse through grid points.
    \item [(6)] Update Wiener increments.
    \item [(7)] Update dense grid trajectory value.
    \item [(8)] Update sparse grid trajectory value.
\end{enumerate}

\section{Conclusions}
\label{sec:conclude}
In this paper, we analyzed the MLMC method for SDEs driven by countably dimensional Wiener process and Poisson random measure in terms of theory and numerical experiments. The main theorem shows that the MLMC method can be applied to a class of SDEs for which their coefficients satisfy certain regularity conditions including discontinuous drift, Hölder-continuous diffusion and jump-function with Hölder constants greater than or equal to one-half. Under provided complexity model, the resulting informational cost is reduced similarly as in the finite-dimensional case. The resulting thesis coincides with the case that the Wiener process is finite (see \cite{GILES}), meaning the cost reduces from $\Theta(\varepsilon^{-4})$ to $\Theta(\varepsilon^{-2}(\log(\varepsilon))^{2})$. In infinite dimensional case we obtained the reduction from $\Theta(\varepsilon^{-4}\delta^{-1}(\varepsilon))$ to $\bigo(\varepsilon^{-2}(\log(\varepsilon))^{2}\delta^{-1}(\varepsilon^{\kappa}))$ with possibly unknown constant $\kappa>1.$ The impact of the unknown constant can be mitigated if the inverse of $\delta$ belongs to a certain class of functions. 
The MLMC method which depends on the additional set of parameters (truncation dimension parameters) is therefore a natural extension of a multilevel method that depends only on the grid density. On the other hand, the lower cost bound for the multilevel method shows that the cost is always greater than the one obtained for a multilevel method for SDEs driven by the finite-dimensional Wiener process. The lower cost bound is (up to constant) proportional to $\varepsilon^{-2}(\log(\varepsilon))^{2} + \varepsilon^{-2}\delta^{-1}(\varepsilon)$ which is equal to the sum of costs for the evaluation of multilevel method in finite-dimensional setting and the truncated dimension Euler algorithm. It is rather a natural consequence of the combined usage of those two algorithms. 

To conclude, this paper paves the way for further research regarding the following open questions:
\begin{itemize}
    \item Can the unknown constant in the exponent of the cost upper bound of the MLMC method be mitigated in general?
    \item What is the cost upper bound if one of Hölder constants is less than one-half?
    \item What are the worst-case complexity lower bounds?
\end{itemize}

In future research, we plan to investigate the error of the MLMC method
under inexact information for the weak approximation of solutions of SDEs.

\section{Acknowledgments}
I would like to thank my supervisor Paweł Przybyłowicz for guidance and inspiration to work on that topic. I would like to thank the unknown reviewer for valuable comments and suggestions to improve the overall quality of this work.

\printbibliography
\end{document}